\newtheorem{thm}{Theorem}
\newtheorem{prop}[thm]{Proposition}
\newtheorem{lem}[thm]{Lemma}
\newtheorem{cor}[thm]{Corollary}
\newtheorem{rem}[thm]{Remark}
\newtheorem{ques}[thm]{Question}
\numberwithin{thm}{section}
\theoremstyle{definition}
\newcommand{\m}[1]{\mathcal{#1}}
\newcommand{\F}[2]{\mathbb{F}_{#1}^{#2}}
\DeclarePairedDelimiter{\card}{|}{|}
\DeclarePairedDelimiter{\floor}{\lfloor}{\rfloor}
\newcommand{\red}[1]{\textcolor{red}{#1}}
\newcommand{\blue}[1]{\textcolor{blue}{#1}}
\newcommand{\green}[1]{\textcolor{ForestGreen}{#1}}
\newcommand{\orange}[1]{\textcolor{orange}{#1}}
\newcommand{\cyan}[1]{\textcolor{cyan}{#1}}
\title{Subspace coverings with multiplicities}
\author{Anurag Bishnoi\thanks{Delft Institute of Applied Mathematics, Technische Universiteit Delft, 2628 CD Delft, Netherlands. E-mail: \textsf{A.Bishnoi@tudelft.nl}.} \and Simona Boyadzhiyska\thanks{Institut f\"ur Mathematik, Freie Universit\"at Berlin, 14195 Berlin, Germany.}\;\thanks{E-mail: \textsf{s.boyadzhiyska@fu-berlin.de}. Research supported by the Deutsche Forschungsgemeinschaft (DFG) Graduiertenkolleg ``Facets of Complexity'' (GRK 2434).} \and Shagnik Das\footnotemark[2]\;\thanks{E-mail: \textsf{shagnik@mi.fu-berlin.de}. Research supported by the Deutsche Forschungsgemeinschaft (DFG) project 415310276.} \and Tam\'as M\'{e}sz\'{a}ros\footnotemark[2]\;\thanks{E-mail: \textsf{tmeszaros87@gmail.com}. Research supported by  the Deutsche Forschungsgemeinschaft (DFG) under Germany’s Excellence Strategy - The Berlin Mathematics Research Center MATH+ (EXC-2046/1, project ID: 390685689).}}
\begin{document}
\maketitle

\begin{abstract}
We study the problem of determining the minimum number $f(n,k,d)$ of affine subspaces of codimension $d$ that are required to cover all points of $\mathbb{F}_2^n\setminus \{\vec{0}\}$ at least $k$ times while covering the origin at most $k - 1$ times. The case $k=1$ is a classic result of Jamison, which was independently obtained by Brouwer and Schrijver for $d = 1$. The value of $f(n,1,1)$ also follows from a well-known theorem of Alon and F\"uredi about coverings of finite grids in affine spaces over arbitrary fields. 

Here we determine the value of this function exactly in various ranges of the parameters. In particular, we prove that for $k\geq2^{n-d-1}$ we have $f(n,k,d)=2^d k-\floor*{\frac{k}{2^{n-d}}}$,
while for $n > 2^{2^d k-k-d+1}$ we have $f(n,k,d)=n + 2^d k -d-2$, and also study the transition between these two ranges. While previous work in this direction has primarily employed the polynomial method, we prove our results through more direct combinatorial and probabilistic arguments, and also exploit a connection to coding theory.
\end{abstract}


\section{Introduction} \label{sec:intro}

How many affine hyperplanes does it take to cover the vertices of the $n$-dimensional Boolean hypercube, $\{0,1\}^n$? This simple question has an equally straightforward answer --- one can cover all the vertices with a parallel pair of hyperplanes, while it is easy to see that a single plane can cover at most half the vertices, and so two planes are indeed necessary. However, the waters are quickly muddied with a minor twist to the problem.

Indeed, if one is instead asked to cover all the vertices except the origin, the parallel hyperplane construction is no longer valid. Given a moment's thought, one might come up with the much larger family of $n$ hyperplanes given by $\{ \vec{x} : x_i = 1 \}$ for $i \in [n]$. This fulfils the task and, surprisingly, turns out to be optimal, although this is far from obvious. This problem has led to rich veins of research in both finite geometry and extremal combinatorics, and in what follows we survey its history before introducing our new results.

\subsection{An origin story}

When we work over the finite field $\F{2}{}$, this problem is equivalent to the well-known blocking set problem from finite geometry, and it was in this guise that it was first studied. A blocking set in $\F{2}{n}$ is a set of points that meets every hyperplane, and the objective is to find a blocking set of minimum size. By translating, we may assume that our blocking set contains the origin $\vec{0}$, and so the problem reduces to finding a collection of points that meets all hyperplanes avoiding the origin. Applying duality, now, we return to our original problem of covering the nonzero points of $\F{2}{n}$ with affine hyperplanes.

From this perspective, there is no reason to restrict our attention to the binary field $\F{2}{}$, and we can generalise the problem to ask how many hyperplanes are needed to cover the nonzero points of $\F{q}{n}$. Going even further, one may replace the hyperplanes with affine subspaces of codimension $d$. In this generality, the problem was answered in the late 1970s by Jamison~\cite{J77}, who proved that the minimum number of affine subspaces of codimension $d$ that cover all nonzero points in $\F{q}{n}$ while avoiding the origin is $q^d - 1 + (n-d)(q-1)$. In particular, when $q = 2$ and $d=1$, this lower bound is equal to $n$, showing that the earlier construction with $n$ planes is optimal. A simpler proof of the case $d=1$ was independently provided by Brouwer and Schrijver~\cite{BS78}.

While the finite geometry motivation naturally leads one to work over finite fields, one can also study the problem over infinite fields $\F{}{}$. Of course, one would need infinitely many hyperplanes to cover all nonzero points of $\F{}{n}$, which is why we instead ask how many hyperplanes are needed to cover the nonzero points of the hypercube $\{0,1\}^n \subseteq \F{}{n}$. This problem was raised in the early 1990s by Komj\'ath~\cite{K94}, who, in order to prove some results in infinite Ramsey theory, showed that this quantity must grow with $n$. Shortly afterwards, a celebrated result of Alon and F\"uredi~\cite{AF93} established a tight bound in the more general setting of covering all but one point of a finite grid. They showed that, for any collection of finite subsets $S_1, S_2, \hdots, S_n$ of some arbitrary field $\F{}{}$, the minimum number of hyperplanes needed to cover all but one point of $S_1 \times S_2 \times \hdots \times S_n$ is $\sum_i \left( \card{S_i} - 1 \right)$. If we take $S_i = \{0,1\}$ for all $i$, this once again shows that one needs $n$ hyperplanes to cover the nonzero points of the hypercube.

\subsection{The polynomial method}

Despite these motivating applications to finite geometry and Ramsey theory, the primary reason this problem has attracted so much attention lies in the proof methods used. These hyperplane covers have driven the development of the polynomial method --- indeed, in light of his early results, this is sometimes referred to as the Jamison method in finite geometry~\cite{BF91}.

To see how polynomials come into play, suppose we have a set of hyperplanes $\{H_i : i \in [m] \}$ in $\F{}{n}$, with the plane $H_i$ defined by $H_i = \{ \vec{x} : \vec{x} \cdot \vec{a}_i = c_i \}$ for some normal vector $\vec{a}_i \in \F{}{n}$ and some constant $c_i \in \F{}{}$. We can then define the degree-$m$ polynomial $f(\vec{x}) = \prod_{i \in m} \left( \vec{x} \cdot \vec{a}_i - c_i \right)$, observing that $f(\vec{x}) = 0$ if and only if $\vec{x}$ is covered by one of the hyperplanes $H_i$. Thus, lower bounds on the degrees of polynomials that vanish except at the origin translate to lower bounds on the number of hyperplanes needed to cover all nonzero points.

This approach has proven very robust, and lends itself to a number of generalisations. For instance, K\'os, M\'esz\'aros and R\'onyai~\cite{KMR11} and Bishnoi, Clark, Potukuchi and Schmitt~\cite{BCPSch18} considered variations over rings, while Blokhuis, Brouwer and Sz\H{o}nyi~\cite{BBSz10} studied the problem for quadratic surfaces and Hermitian varieties in projective and affine spaces over $\F{q}{}$.

\subsection{Covering with multiplicity}

In this paper, we shall remain in the original setting, but instead extend the problem to higher multiplicities. That is, we shall seek the minimum number of hyperplanes needed in $\F{}{n}$ to cover the nonzero points at least $k$ times, while the origin is covered fewer times. Previous work in this direction has imposed the stricter condition of avoiding the origin altogether; Bruen~\cite{B92} considered this problem over finite fields, while Ball and Serra~\cite{BS09} and K\'os and R\'onyai~\cite{KR12} worked with finite grids over arbitrary fields, with some further generalisations recently provided by Geil and Matr\'inez-Pe\~{n}as~\cite{GM19}. In all of these papers, the polynomial method described above was strengthened to obtain lower bounds for this problem with higher multiplicities. However, these lower bounds are most often not tight; Zanella~\cite{Z02} discusses when Bruen's bound is sharp, with some improvements provided by Ball~\cite{B00}.

Significant progress in this line of research was made recently when Clifton and Huang~\cite{CH20} studied the special case of covering all nonzero points of $\{0,1\}^n \subseteq \mathbb{R}^n$ at least $k$ times, while leaving the origin uncovered. Observe that one can remove $k-1$ hyperplanes arbitrarily from such a cover, and the remainder will still cover each nonzero point at least once. Thus, by the Alon--F\"uredi Theorem, we must be left with at least $n$ planes, giving a lower bound of $n+k-1$. While it is not hard to see that this is tight for $k=2$, Clifton and Huang used Ball and Serra's Punctured Combinatorial Nullstellensatz~\cite{BS09} to improve the lower bound for larger $k$. They showed that for $k = 3$ and $n \ge 2$, the correct answer is $n + 3$, while for $k \ge 4$ and $n \ge 3$, the answer lies between $n + k + 1$ and $n + \binom{k}{2}$, conjecturing the upper bound to be correct when $n$ is large with respect to $k$. However, they showed that this was far from the case when $n$ is fixed and $k$ is large; in this range, the answer is $(c_n + o(1))k$, where $c_n$ is the $n$th term in the harmonic series.

A major breakthrough was then made by Sauermann and Wigderson~\cite{SW20}, who skipped the geometric motivation and resolved the polynomial problem directly. More precisely, they proved the following theorem.

\begin{thm} \label{thm:sauwig}
Let $k \ge 2$ and $n \ge 2k-3$, and let $P \in \mathbb{R}[x_1, \hdots, x_n]$ be a polynomial having zeroes of multiplicity at least $k$ at all points in $\{0,1\}^n \setminus \{ \vec{0} \}$, and such that $P$ does not have a zero of multiplicity at least $k-1$ at $\vec{0}$. Then $P$ must have degree at least $n + 2k - 3$. Furthermore, for every $\ell \in \{0, 1, \hdots, k-2\}$, there exists a polynomial $P$ with degree exactly $n + 2k - 3$ having zeroes of multiplicity at least $k$ at all points in $\{0,1\}^n \setminus \{ \vec{0} \}$, and such that $P$ has a zero of multiplicity exactly $\ell$ at $\vec{0}$.
\end{thm}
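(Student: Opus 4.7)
The plan is to establish the two parts of the theorem separately: the lower bound $\deg P \geq n + 2k - 3$ and the existence, for each $\ell \in \{0, 1, \ldots, k - 2\}$, of a polynomial of degree exactly $n + 2k - 3$ realising it.

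For the lower bound I would attempt induction on $k \geq 2$. The base case $k = 2$ asserts that any $P$ with $P(\vec 0) \neq 0$ vanishing to order $\geq 2$ on $\{0,1\}^n \setminus \{\vec 0\}$ has degree $\geq n + 1$; this can be extracted from a multiplicity-enhanced Alon--F\"uredi-type result (Ball--Serra's Punctured Combinatorial Nullstellensatz). For the inductive step, given $P$ of degree $d$ satisfying the $k$-hypothesis, I would try to construct an auxiliary polynomial $Q$ of degree $d - 2$ satisfying the $(k-1)$-hypothesis: multiplicity $\geq k - 1$ at nonzero hypercube points and multiplicity $\leq k - 3$ at the origin. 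Applying the induction hypothesis to $Q$ would then give $d - 2 \geq n + 2(k-1) - 3$, hence $d \geq n + 2k - 3$. The nontrivial step is producing $Q$: the blunt choice $\partial_i \partial_j P$ lowers the degree by $2$ but also lowers the nonzero-point multiplicity by up to $2$, leaving only $\geq k - 2$, which is too weak. Instead one needs a carefully tuned second-order linear differential operator whose coefficients are chosen so that at each nonzero hypercube point the multiplicity drops by at most $1$, while the full degree-$\leq k - 2$ Taylor jet of $P$ at the origin is annihilated (forcing the new origin multiplicity down by $\geq 2$). The hypothesis $n \geq 2k - 3$ should enter precisely here, providing the dimensional slack needed to solve the linear system that selects such an operator.

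For the construction, I would work with the template
\[
    P_\ell(\vec x) = \prod_{i = 1}^n (x_i - 1) \cdot R_\ell(\vec x),
\]
where $R_\ell \in \mathbb{R}[x_1, \ldots, x_n]$ has degree $2k - 3$. Since $\prod_i (x_i - 1)$ contributes multiplicity exactly $w$ at a hypercube point of Hamming weight $w$, what is required of $R_\ell$ is extra multiplicity $\geq k - w$ at every weight-$w$ hypercube point for $w \in \{1, \ldots, k - 1\}$, together with multiplicity exactly $\ell$ at the origin. In its simplest form $R_\ell = R_\ell(x_1 + \cdots + x_n)$ reduces to a univariate Hermite interpolation problem, which for small $k$ is directly solvable by a product of linear factors such as $\prod_{w = 1}^{k - 1}(t - w)^{k - w}$ (up to adjustment for $\ell$); for large $k$ the total required vanishing $\ell + \binom{k}{2}$ overshoots the available degree, so $R_\ell$ must become a genuinely multivariate polynomial, for instance built from elementary symmetric functions of $\vec x$ tailored to the Hamming-weight level sets. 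The existence of $R_\ell$ for every admissible $\ell$ could then be verified by a dimension count, and the origin multiplicity can be shifted from $\ell = k - 2$ down to smaller values by multiplying by factors with the appropriate low-order vanishing at $\vec 0$.

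The main obstacle is the inductive step of the lower bound: finding an order-$2$ operation that decreases the polynomial's degree by $2$ while decreasing its multiplicity by only $1$ at each of the $2^n - 1$ nonzero hypercube points but by $2$ at the origin. This is the arithmetic heart of the argument, and it is precisely where the assumption $n \geq 2k - 3$ has to be exploited. The construction side, while intricate for large $k$, is comparatively more concrete---essentially a careful Hermite-type interpolation exercise on the Hamming-weight levels of the Boolean cube.
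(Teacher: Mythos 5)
A preliminary but important point: this theorem is not proved in the paper at all --- it is quoted as background, being the main result of Sauermann and Wigderson~\cite{SW20} --- so there is no in-paper argument to compare yours against, and your proposal has to stand on its own. As it stands, its central step is missing. The heart of your lower-bound induction on $k$ is the existence of a second-order linear differential operator $D$ such that $DP$ has degree $\deg P - 2$, loses at most one order of vanishing at each of the $2^n-1$ nonzero cube points, and has multiplicity at most $k-3$ at the origin. You flag this yourself as the ``main obstacle,'' and it is not a small gap: at a point $\vec{a}\neq\vec{0}$ where $P$ vanishes to order exactly $k$, forcing $DP$ to vanish to order at least $k-1$ means killing the entire degree-$(k-2)$ homogeneous Taylor jet of $DP$ at $\vec{a}$, i.e.\ roughly $\binom{n+k-3}{k-2}$ linear conditions per point, hence on the order of $2^n\binom{n+k-3}{k-2}$ conditions in total, against only about $n^2$ free coefficients for a constant-coefficient second-order operator (and giving $D$ polynomial coefficients spends exactly the degree you are trying to save). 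There is also a tension at the origin: applying $D$ can raise as well as lower the order of vanishing there, so ``annihilating the low-order jet at $\vec{0}$'' must be reconciled with the requirement that $DP$ \emph{not} vanish to order $k-2$ at $\vec{0}$. Nothing in the proposal indicates how the hypothesis $n\ge 2k-3$ would rescue this overdetermined system; the claim that it ``enters precisely here'' is a hope, not an argument. Even granting the base case $k=2$ (which is plausibly within reach of the punctured Nullstellensatz of~\cite{BS09}), the inductive step as described is not merely incomplete but looks unsalvageable in this form, and the actual argument of~\cite{SW20} is a substantially more involved one that does not rest on such an operator.

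The construction half is also not complete. Factoring out $\prod_i(x_i-1)$ is sensible and correctly reduces the problem to finding $R_\ell$ of degree $2k-3$ with multiplicity at least $k-w$ at every weight-$w$ point for $1\le w\le k-1$ and multiplicity exactly $\ell$ at $\vec{0}$. But your symmetric candidate needs degree about $\binom{k}{2}$, and the fallback --- ``a dimension count'' for a genuinely multivariate $R_\ell$ --- is neither carried out nor sufficient in principle: the origin condition is an \emph{equality} of multiplicity, which a parameter count for a homogeneous linear system cannot deliver, and any such count would have to work already at the threshold $n=2k-3$, not just for $n$ large. Since the existence of these polynomials is exactly the content of the second half of the theorem, this part too is asserted rather than proved.
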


As an immediate corollary, this improves the lower bound in the Clifton--Huang result from $n+k+1$ to $n+2k-3$. However, Theorem~\ref{thm:sauwig} establishes that $n+2k-3$ is also an upper bound for the polynomial problem, whereas Clifton and Huang conjecture that the answer for their problem should be $n + \binom{k}{2}$. This suggests that the polynomial method alone is not sufficient to resolve the hyperplane covering problem.

Even though Theorem~\ref{thm:sauwig} is stated for polynomials defined over $\mathbb{R}$, Sauermann and Wigderson note that the proof works over any field of characteristic zero. However, the result need not hold over finite fields. In particular, they show the existence of a polynomial $P_4$ over $\F{2}{}$ of degree $n + 4$ with zeroes of multiplicity four at all nonzero points in $\F{2}{n}$ and with $P_4(\vec{0}) \neq 0$. More generally, for every $k \ge 4$, $P_k(\vec{x}) = x_1^{k-4} (x_1 - 1)^{k-4} P_4(\vec{x})$ is a binary polynomial of degree only $n + 2k - 4$ with zeroes of multiplicity $k$ at all nonzero points and of multiplicity $k-4$ at the origin. The correct behaviour of the problem over finite fields is left as an open problem. 

Note also that Theorem~\ref{thm:sauwig} allows the origin to be covered up to $k-2$ times. Sauermann and Wigderson also considered the case where the origin must be covered with multiplicity exactly $k-1$, showing that the minimum degree then increases to $n+2k-2$. In contrast to Theorem~\ref{thm:sauwig}, the proof of this result is valid over all fields.

\subsection{Our results}

In this paper, we study the problem of covering with multiplicity in $\F{2}{n}$. We are motivated not only by the body of research described above, but also by the fact, as we shall show in Proposition~\ref{prop:strictcodeequiv}, when one forbids the origin from being covered, this problem is equivalent to finding linear binary codes of large minimum distance. As this classic problem from coding theory has a long and storied history of its own, and is likely to be very difficult, we shall instead work in the setting where we require all nonzero points in $\F{2}{n}$ to be covered at least $k$ times while the origin can be covered at most $k-1$ times.

In light of the previous results, we shall abstain from employing the polynomial method, and instead attack the problem more directly with combinatorial techniques. As an added bonus, our arguments readily generalise to covering points with codimension-$d$ affine subspaces, rather than just hyperplanes, thereby extending Jamison's original results in the case $q=2$. To be able to discuss our results more concisely, we first introduce some notation that we will use throughout the paper.

Given integers $k \ge 1$ and $n \ge d \ge 1$, we say a multiset $\m H$ of $(n-d)$-dimensional affine subspaces in $\F{2}{n}$ is a \emph{$(k,d)$-cover} if every nonzero point of $\F{2}{n}$ is covered at least $k$ times, while $\vec{0}$ is covered at most $k-1$ times. We next introduce an extremal function $f(n,k,d)$, which is defined to be the minimum possible size of a $(k,d)$-cover in $\F{2}{n}$.

For instance, when we take $k = 1$, we obtain the original covering problem, and from the work of Jamison~\cite{J77} we know $f(n,1,d) = n + 2^d - d - 1$. At another extreme, if we take $d = n$, then our affine subspaces are simply individual points, each of which must be covered $k$ times, and hence $f(n,k,n) = k \left( 2^n - 1 \right)$. We study this function for intermediate values of the parameters, determining it precisely when either $k$ is large with respect to $n$ and $d$, or $n$ is large with respect to $k$ and $d$, and derive asymptotic results otherwise.


\begin{thm} \label{thm:main}
Let $k \ge 1$ and $n \ge d \ge 1$. Then:
\begin{itemize}
	\item[(a)] If $k \ge 2^{n-d-1}$, then $f(n,k,d) = 2^d k - \floor*{ \frac{k}{2^{n-d}} }$.
	\item[(b)] If $n > 2^{2^d k - d - k + 1}$, then $f(n,k,d) = n + 2^d k - d - 2$.
	\item[(c)] If $k \ge 2$ and $n \ge \floor*{\log_2 k} + d + 1$, then $n + 2^d k - d - \log_2 (2k) \le f(n,k,d) \le n + 2^d k - d - 2$.
\end{itemize}
\end{thm}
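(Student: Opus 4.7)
The theorem has three parts; I plan to address each with tailored arguments, with the upper bounds in (b) and (c) coinciding.

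For part (a), the lower bound is a direct double-counting: each codim-$d$ affine subspace contains $2^{n-d}$ points, so summing incidences between the $m$ subspaces of a $(k,d)$-cover and the nonzero points of $\F{2}{n}$ gives $m \cdot 2^{n-d} \ge (2^n - 1) k$, whence $m \ge 2^d k - \floor*{k/2^{n-d}}$ by integrality. For the matching upper bound I would give an explicit construction: writing $k = q \cdot 2^{n-d} + r$ with $0 \le r < 2^{n-d}$, take $q$ copies of a uniformly-covering family of codim-$d$ subspaces that avoid the origin, together with enough additional parallel classes or partial families to supply the remaining coverage $r$ to each nonzero point. The hypothesis $k \ge 2^{n-d-1}$ ensures that these pieces can be combined so that the origin bound of $k - 1$ is respected and the total size meets the lower bound with equality.

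For the upper bound in parts (b) and (c), both equal to $n + 2^d k - d - 2$, I would combine three ingredients. First, take a Jamison $(1,d)$-cover $\mathcal{J}$ of size $n + 2^d - d - 1$, chosen so that some codim-$d$ linear subspace $W \subseteq \F{2}{n}$ has each nonzero point covered at least twice by $\mathcal{J}$; for $d = 1$, the standard cover $\{x : x_i = 1\}_{i=1}^n$ has this property with $W = \{x : \sum_i x_i = 0\}$, since every nonzero $x$ is covered $\w(x)$ times and every nonzero $x \in W$ has even weight $\ge 2$. Second, add $k - 2$ complete parallel classes, contributing $(k-2) \cdot 2^d$ subspaces and $k - 2$ extra coverage to every point. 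Third, append the $2^d - 1$ non-origin cosets of $W$, which add $1$ to the coverage of each $x \notin W$ and nothing to each $x \in W$. Then any $x \notin W$ is covered at least $1 + (k-2) + 1 = k$ times, any $x \in W \setminus \{\vec{0}\}$ at least $2 + (k-2) + 0 = k$ times, and $\vec{0}$ is covered exactly $k - 2 \le k - 1$ times; the total number of subspaces is $n + 2^d k - d - 2$.

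The main challenge is the lower bound in part (c). I plan to argue by induction on $n$, with base case $n_0 = \floor*{\log_2 k} + d + 1$: here $2^{n_0 - d} > k$, so part (a)'s counting bound gives $f(n_0, k, d) \ge 2^d k$, which already matches $n_0 + 2^d k - d - \log_2(2k)$. The inductive step $f(n, k, d) \ge f(n-1, k, d) + 1$ would be obtained by restricting an optimal $(k,d)$-cover to a suitable hyperplane $H$ through the origin: each $A$ in the cover contributes either a codim-$d$ subspace of $H$ (if $V_A \not\subseteq H$), nothing (if $V_A \subseteq H$ and $A \cap H = \emptyset$), or a codim-$(d-1)$ subspace (if $A \subseteq H$, which can be split into a pair of codim-$d$ subspaces of $H$ at a cost of $+1$). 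A probabilistic/averaging argument over $H$ should locate one for which the number of empty intersections exceeds the number of fully-contained subspaces by at least one, giving a $(k,d)$-cover of $H \cong \F{2}{n-1}$ of size at most $m - 1$. The main obstacle is executing this averaging cleanly — both counts depend on the distribution of directions $V_A$ in the cover, and in the end the coding-theoretic interpretation of the problem should provide the needed control. For part (b), the drastically stronger hypothesis $n > 2^{2^d k - d - k + 1}$ should allow a pigeonhole-type refinement (exploiting the huge value of $n$ to force structural repetition in the cover's directions) that reduces the logarithmic loss in (c) to the claimed constant $2$.
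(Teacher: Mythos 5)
Your overall skeleton matches the paper's in several places---the double-counting lower bound for (a), the upper-bound construction for (b)/(c) (for $d=1$ your $\{H_{\vec{e}_i}\}\cup\{H_{\vec{1}}\}$ plus $k-2$ parallel pairs is exactly the paper's), and the plan of proving $f(n,k,d)\ge f(n-1,k,d)+1$ by restricting an optimal cover to a random hyperplane through the origin and then inducting from the base case $n=\floor*{\log_2 k}+d+1$. But there are genuine gaps. First, in (a) the hard regime is $2^{n-d-1}\le k<2^{n-d}$, where your decomposition $k=q\cdot 2^{n-d}+r$ has $q=0$: you cannot use copies of the uniform family at all, and $r=k$ parallel classes would cover the origin $k$ times, violating the definition. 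The missing content is an explicit family such as the paper's $\m H_0=\{H_{\vec{u}}:u_n=1\}$, which has $2^{n-1}$ hyperplanes avoiding the origin and covers every nonzero point at least $2^{n-2}$ times (general $d$ is then handled by a reduction to $d=1$, which you also do not supply; similarly, your claim that a Jamison $(1,d)$-cover can be ``chosen'' with a doubly-covered codimension-$d$ subspace $W$ is only verified for $d=1$). Second, and most seriously, your plan for the lower bound in (b)---``a pigeonhole-type refinement forcing structural repetition in the cover's directions''---is not an argument and does not resemble anything that works. The paper's route is: show via the equivalence between $(k,1;0)$-covers and linear codes plus the Hamming bound that when $n>2^{2^dk-k-d+1}$ any optimal cover must touch the origin at least $k-2$ times (otherwise deleting $k-3$ origin-subspaces leaves a $(3,d;0)$-cover, hence a distance-$3$ code of dimension $n$, forcing size at least $n+\log n+k-3$, contradicting the upper bound); then determine $g(n,k,d;k-1)=n+2^dk-d-1$ exactly by a separate induction, and conclude $f\ge g(n,k,d;k-1)-1$. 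None of these ingredients appear in your proposal, so part (b) is unproved.

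On the monotonicity lemma itself, your case analysis of the restriction is correct, but you stop at ``an averaging argument should work'' and you anticipate needing coding-theoretic control over the directions $V_A$. In fact no such control is needed: for a uniformly random normal vector, $\mathbb{P}(A\cap H=\emptyset)$ and $\mathbb{P}(A\subseteq H)$ depend only on whether $\vec{0}\in A$ (they are $\tfrac{2^{d-1}}{2^n-1}$ and $\tfrac{2^{d-1}-1}{2^n-1}$ when $\vec{0}\notin A$, and $0$ and $\tfrac{2^d-1}{2^n-1}$ when $\vec{0}\in A$), so linearity of expectation gives $\mathbb{E}[\card{X}-\card{Y}]=\bigl(\card{\m H}-2^ds\bigr)/(2^n-1)$, which is positive by the double-counting bound of part (a) applied to $g(n,k,d;s)$. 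Working $s$ (the number of subspaces through the origin) into the argument---i.e., proving the refined statement for $g(n,k,d;s)$ rather than for $f$ directly---is exactly what makes both this computation and the endgame of part (b) go through, and is the structural idea your proposal is missing.
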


There are a few remarks worth making at this stage. First, observe that, just as in the Clifton--Huang setting, the extremal function $f(n,k,d)$ exhibits different behaviour when $n$ is fixed and $k$ is large as compared to when $k$ is fixed and $n$ is large. Second, and perhaps most significantly, Theorem~\ref{thm:main} demonstrates the gap between the hyperplane covering problem and the polynomial degree problem: our result shows that, for any $k\geq 4$ and sufficiently large $n$, we have $f(n,k,1) = n + 2k - 3$, whereas the answer to the corresponding polynomial problem is at most $n + 2k - 4$, as explained after Theorem~\ref{thm:sauwig}. Our ideas allow us to establish an even stronger separation in the case $k = 4$ --- while the polynomial $P_4$ constructed by Sauermann and Wigderson, which has zeroes of multiplicity at least four at all nonzero points of $\F{2}{n}$ while not vanishing at the origin, has degree only $n + 4$, we shall show in Corollary~\ref{cor:bound} that any hyperplane system with the corresponding covering properties must have size at least $n + \log \left( \tfrac23 n \right)$. Third, we see that in the intermediate range, when both $n$ and $k$ grow moderately, the bounds in (c) determine $f(n,k,d)$ up to an additive error of $\log_2 (2k)$, which is a lower-order term. Thus, $f(n,k,d)$ grows asymptotically like $n + 2^d k$. Last of all, if one substitutes $k = 2^{n-d-1} - 1$, the lower bound from (c) is larger than the value in (a). This shows that $k \ge 2^{n-d-1}$ is indeed the correct range for which the result in (a) is valid. In contrast, we believe the bound on $n$ in (b) is far from optimal, and discuss this in greater depth in Section~\ref{sec:mid}.

The remainder of this paper is devoted to the proof of Theorem~\ref{thm:main}, and is organised as follows. In Section~\ref{sec:largek} we prove part (a), determining the extremal function for large multiplicities. We prove part (b) in Section~\ref{sec:largen}, handling the case when the dimension of the ambient space grows quickly. A key step in the proof is showing the intuitive, yet surprisingly not immediate, fact that $f(n,k,d)$ is strictly increasing in $n$, as a result of which we shall also be able to deduce the bounds in (c). Section~\ref{sec:mid} is devoted to the study of the gradual transition between parts (a) and (b), where we exhibit some constructions that show $f(n,k,d)$ takes values strictly between those of parts (a) and (b). Finally, we end by presenting some concluding remarks and open problems in Section~\ref{sec:conc}.


\section{Covering with large multiplicity}
\label{sec:largek}

In this section we prove Theorem~\ref{thm:main}(a), handling the case of large multiplicities. We start by introducing some definitions and notation that we will use in the proof. To start with, it will be convenient to have some notation for affine hyperplanes. Given a nonzero vector $\vec{u}\in \F{2}{n}$, let $H_{\vec{u}}$ denote the hyperplane $\{ \vec{x} : \vec{x} \cdot \vec{u} = 1 \}$. 

Next, it will sometimes be helpful to specify how many times the origin is covered. Hence, given integers $n \ge d \ge 1$ and $k > s \ge 0$, we call a $(k,d)$-cover in $\F{2}{n}$ a \emph{$(k,d;s)$-cover} if it covers the origin exactly $s$ times. Let us write $g(n,k,d;s)$ for 
the minimum possible size of a $(k,d;s)$-cover and call a cover \emph{optimal} if it has this minimum size. Clearly, we have $f(n,k,d)=\min_{0\leq s<k} g(n,k,d;s)$, so any knowledge about this more refined function directly translates to our main focus of interest.

\subsection{The lower bound}
\label{sec:counting_lower}
To start with, we prove a general lower bound, valid for all choices of parameters, that follows from a simple double-counting argument. This establishes the lower bound of Theorem~\ref{thm:main}(a).

\pagebreak

\begin{lem}\label{lem:double-count}
Let $n,k,d,s$ be integers such that $n \ge d \ge 1$ and $k > s \ge 0$. Then
\begin{equation*}
g(n,k,d;s) \geq 2^d k - \left\lfloor\frac{k-s}{2^{n-d}}\right\rfloor.
\end{equation*}
In particular, $f(n,k,d) \ge 2^d k - \floor*{\frac{k}{2^{n-d}}}$.
\end{lem}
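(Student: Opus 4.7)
The plan is to establish the bound by a straightforward double-counting argument on incidences between the subspaces of $\m H$ and the points of $\F{2}{n}$. This is the natural first thing to try, and since the statement is about the most basic constraint (total volume of the cover must account for all required multiplicities), I expect the proof to be essentially this single calculation.

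More concretely, let $\m H$ be an arbitrary $(k,d;s)$-cover of size $m = \card{\m H}$ (counted with multiplicity). I would first note that every affine subspace of codimension $d$ in $\F{2}{n}$ contains exactly $2^{n-d}$ points, so the total number of incident pairs $(\vec x, H)$ with $H \in \m H$ and $\vec x \in H$ is exactly $m \cdot 2^{n-d}$. On the other hand, counting incidences by points, each nonzero point contributes at least $k$ and the origin contributes exactly $s$, giving a lower bound of $k(2^n - 1) + s$. Equating the two counts yields
\[
m \cdot 2^{n-d} \;\ge\; k(2^n - 1) + s \;=\; k\cdot 2^n - (k-s),
\]
and hence $m \geq 2^d k - \tfrac{k-s}{2^{n-d}}$. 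Since $m$ is a nonnegative integer and $2^d k$ is an integer, this forces $m \ge 2^d k - \floor*{\tfrac{k-s}{2^{n-d}}}$, which is the claimed bound on $g(n,k,d;s)$.

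For the ``in particular'' statement, I would simply take the minimum over $s \in \{0,1,\dots,k-1\}$. The right-hand side $2^d k - \floor*{\tfrac{k-s}{2^{n-d}}}$ is minimized when $\floor*{\tfrac{k-s}{2^{n-d}}}$ is maximized, which happens at $s = 0$, producing the bound $f(n,k,d) \ge 2^d k - \floor*{\tfrac{k}{2^{n-d}}}$.

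There is no real obstacle here; the only subtlety worth flagging is to correctly convert $m \ge 2^d k - \tfrac{k-s}{2^{n-d}}$ to an integer inequality (the identity $\ceil*{A - x} = A - \floor*{x}$ for integer $A$ and nonnegative real $x$), and to verify at the end that the worst value of $s$ in the refined bound is indeed $s=0$, which is immediate since $k - s$ is decreasing in $s$.
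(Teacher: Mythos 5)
Your proof is correct and is essentially the same double-counting argument the paper uses: count incidences between points and subspaces, bound them from below by $k(2^n-1)+s$, and then use integrality of the cover size and monotonicity in $s$ to get both stated bounds.
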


\begin{proof}
Let $\m H$ be an optimal $(k,d;s)$-cover of $\F{2}{n}$, so that we have $g(n,k,d;s) = \card{\m H}$. We double-count the 
pairs $(\vec{x}, S)$ with $\vec{x} \in \F{2}{n} $, $S \in \m H$, and $\vec{x} \in S$. On the one hand, every affine subspace $S \in \m H$ contains $2^{n-d}$ points, and so there are $2^{n-d} \card {\m H}$ such pairs. On the other hand, since every nonzero point is covered at least $k$ times and the origin is covered $s$ times, there are at least $(2^n - 1)k+s$ such pairs. Thus $(2^n - 1 )k + s \le 2^{n-d} \card{\m H}$, and the claimed lower bound follows from solving for $\card{\m H}$ and observing that $g(n,k,d;s)$ is an integer. The bound on $f(n,k,d)$ is obtained by noticing that our lower bound on $g(n,k,d;s)$ is increasing in $s$, and is therefore minimised when $s = 0$.
\end{proof}

\subsection{The upper bound construction}
\label{sec:upper}

To prove the upper bound of Theorem~\ref{thm:main}(a), we must construct small $(k,d)$-covers. As a first step, we introduce a recursive method
for $(k,d;s)$-covers that allows us to reduce to the $d = 1$ case. 

\begin{lem}\label{lem:hyperplanerecursion}
For integers $n \ge d \ge 2$ and $k > s \ge 0$ we have
\begin{equation*}
g(n,k,d;s)\leq g(n-d+1,k,1;s)+2k(2^{d-1}-1),
\end{equation*}
and, therefore,
\begin{equation*}
    f(n,k,d) \le f(n-d+1,k,1) + 2k(2^{d-1}-1).
\end{equation*}
\end{lem}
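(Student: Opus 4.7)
The plan is to construct a $(k,d;s)$-cover of $\mathbb{F}_2^n$ by lifting an optimal $(k,1;s)$-cover of a lower-dimensional subspace, and then padding it with a bounded number of extra codimension-$d$ subspaces that cover the remaining slices. Decompose $\mathbb{F}_2^n = V \oplus W$, where $V \cong \mathbb{F}_2^{n-d+1}$ corresponds to the first $n-d+1$ coordinates and $W \cong \mathbb{F}_2^{d-1}$ to the last $d-1$. This identification will let the hypercube covering structure on $V$ control the slice $V \times \{\vec{0}_W\}$, and leave the other slices of $\mathbb{F}_2^n$ to be handled separately.

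First I would fix an optimal $(k,1;s)$-cover $\mathcal{H}_0$ of $V$ of size $g(n-d+1,k,1;s)$, and lift each hyperplane $H \in \mathcal{H}_0$ to the subspace $H \times \{\vec{0}_W\} \subset \mathbb{F}_2^n$. Each lifted set has codimension $d$ in $\mathbb{F}_2^n$, and on the slice $V \times \{\vec{0}_W\}$ these lifted subspaces inherit the covering behaviour of $\mathcal{H}_0$: every nonzero point of that slice is covered at least $k$ times, and the origin is covered exactly $s$ times.

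Next, to handle the remaining $2^{d-1}-1$ slices $V \times \{w\}$ with $w \in W \setminus \{\vec{0}_W\}$, I would pick any pair of complementary parallel hyperplanes $H, H^c$ of $V$ and, for each such $w$, add the two codimension-$d$ subspaces $H \times \{w\}$ and $H^c \times \{w\}$ to the cover with multiplicity $k$ each. Since $\{H, H^c\}$ partitions $V$, these $2k$ subspaces cover every point of the slice $V \times \{w\}$ exactly $k$ times. Crucially, none of them contain the origin, so the multiplicity at $\vec{0}$ is unchanged. Over the $2^{d-1}-1$ nonzero choices of $w$, this contributes $2k(2^{d-1}-1)$ subspaces.

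Combining both families yields a $(k,d;s)$-cover of $\mathbb{F}_2^n$ of size $g(n-d+1,k,1;s) + 2k(2^{d-1}-1)$, giving the first inequality. The second then follows by minimizing over $s$, using $f(n,k,d) = \min_{0 \le s < k} g(n,k,d;s)$ and the analogous identity for $f(n-d+1,k,1)$. The construction is routine and I do not anticipate any real obstacle; the only care needed is bookkeeping at the origin, namely checking that the lifted subspaces contribute multiplicity exactly $s$ there while the slice subspaces contribute nothing, so the resulting cover is indeed of type $(k,d;s)$.
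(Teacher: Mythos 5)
Your construction is correct and is essentially the same as the paper's: your coordinate decomposition into slices $V \times \{w\}$ is exactly the paper's partition of $\F{2}{n}$ into the cosets of an $(n-d+1)$-dimensional subspace, with the untouched slice covered by a lifted optimal $(k,1;s)$-cover and each remaining slice covered exactly $k$ times by $k$ copies of a partition into two codimension-$d$ subspaces. The bookkeeping at the origin and the passage from $g$ to $f$ also match the paper's argument.
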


\begin{proof}
We first deduce the recursive bound on $g(n,k,d;s)$. Let $S_0 \subset \F{2}{n}$ be an arbitrary $(n - d + 1)$-dimensional (vector) subspace, and let $S_1, \hdots, S_{2^{d-1} - 1}$ be its affine translates, that, together with $S_0$, partition $\F{2}{n}$. For every $1\leq i\leq 2^{d-1}-1$, partition $S_i \cong \mathbb{F}_2^{n - d+ 1}$ further into two subspaces, thereby obtaining a total of $2(2^{d - 1} - 1)$ affine subspaces of dimension $n - d$. We start by taking $k$ copies of each of these affine subspaces. This gives us a multiset of $2k(2^{d - 1} - 1)$ subspaces, which cover every point outside $S_0$ exactly $k$ times and leave the points in $S_0$ completely uncovered.

It thus remains to cover the points within $S_0$ appropriately. Since $(n-d)$-dimensional subspaces have relative codimension $1$ in $S_0$, this reduces to finding a $(k,1;s)$-cover within $S_0 \cong \F{2}{n-d+1}$. By definition, we can find such a cover consisting of ${g(n-d+1,k,1;s)}$ subspaces. Adding these to our previous multiset gives a $(k,d;s)$-cover of $\F{2}{n}$ of size $g(n-d+1,k,1;s) + 2k(2^{d-1} - 1)$, as required.

To finish, since $f(n,k,d)=\min_s g(n,k,d;s)$, and the recursive bound holds for each $s$, it naturally carries over to the function $f(n,k,d)$, giving $f(n,k,d)\leq f(n-d+1,k,1)+2k(2^{d-1}-1)$.
\end{proof}

Armed with this preparation, we can now resolve the problem for large multiplicities.

\begin{proof}[Proof of Theorem~\ref{thm:main}(a)]
The requisite lower bound, of course, is given by Lemma~\ref{lem:double-count}. 

For the upper bound, we start by reducing to the case $d = 1$. Indeed, suppose we already know the bound for $d=1$; that is, $f(n,k,1) \le 2k - \floor*{ \frac{k}{2^{n-1}}}$ for all $k \ge 2^{n-2}$. Now, given some $n \ge d \ge 2$ and $k \ge 2^{n-d-1}$, by Lemma~\ref{lem:hyperplanerecursion} we have
\[ f(n,k,d) \le f(n-d+1,k,1) + 2k(2^{d-1} - 1) \le 2k - \floor*{\frac{k}{2^{n-d+1-1}}} + 2k(2^{d-1} - 1) = 2^d k - \floor*{ \frac{k}{2^{n-d}}}, \]
as required.

Hence, it suffices to prove the bound in the hyperplane case. We begin with the lowest multiplicity covered by part (a), namely $k = 2^{n-2}$. Consider the family $\m H_0 = \{ H_{\vec{u}} : \vec{u} \in \F{2}{n}, u_n = 1 \}$, where we recall that $H_{\vec{u}} = \{ \vec{x} : \vec{x} \cdot \vec{u} = 1 \}$. Note that we then have $\card{\m H_0} = 2^{n-1} = 2k = 2k - \floor*{\frac{k}{2^{n-1}}}$, and none of these hyperplanes covers the origin. Given nonzero vectors $\vec{x} = (\vec{x}', x)$ and $\vec{u} = (\vec{u}',1)$ with $\vec{x}', \vec{u}' \in \F{2}{n-1}$ and $x \in \F{2}{}$, we have $\vec{x} \cdot \vec{u} = 1$ if and only if $\vec{x}' \cdot \vec{u}' = 1-x$. 
If $\vec{x}' \neq \vec{0}$, precisely half of the choices for $\vec{u}'$ satisfy this equation; if 
$\vec{x}' = \vec{0}$ (and thus necessarily $x = 1$), the equation is satisfied by all choices of $\vec{u}'$. 
Thus each nonzero point is covered at least $2^{n-2}$ times, 
and hence $\m H_0$ is a $(2^{n-2},1)$-cover of the desired size.

To extend the above construction to the range $2^{n-2}\leq k<2^{n-1}$, one can simply add an arbitrary choice of $k-2^{n-2}$ pairs of parallel hyperplanes. The resulting family will have $2^{n-1}+2\left(k-2^{n-2}\right)=2k=2k - \floor*{\frac{k}{2^{n-1}}}$
elements, every nonzero point is covered at least $k$ times, and the origin is covered $k-2^{n-2}<k$ times.

Finally, suppose $k\geq 2^{n-1}$. Then we can write $k = a2^{n-1}+b$ for some $a\geq 1$ and $0\leq b < 2^{n-1}$. We take $\m H_1 = \{ H_{\vec{u}} : \vec{u} \in \F{2}{n} \setminus \{\vec{0}\} \}$ to be the set of all affine hyperplanes avoiding the origin, of which there are $2^n-1$. Moreover, for each nonzero $\vec{x}$, there are exactly $2^{n-1}$ vectors $\vec{u}$ with $\vec{x} \cdot \vec{u} = 1$, and so each such point is covered $2^{n-1}$ times by the hyperplanes in $\m H_1$.

Now let $\m H$ be the multiset of hyperplanes obtained by taking $a$ copies of $\m H_1$ and appending an arbitrary choice of $b$ pairs of parallel planes. Each nonzero point is then covered $a2^{n-1} + b = k$ times, while the origin is only covered $b < 2^{n-1} \le k$ times, and so $\m H$ is a $(k,1)$-cover. Thus, 
\[ f(n,k,1) \le \card{\m H} = a(2^n - 1) + 2b = 2(a 2^{n-1} + b) - a = 2k - \floor*{ \frac{k}{2^{n-1}}}, \]
proving the upper bound.
\end{proof}


\section{Covering high-dimensional spaces}
\label{sec:largen}

In this section we turn our attention to the case when $n$ is large with respect to $k$, with the aim of proving part (b) of Theorem~\ref{thm:main}. Furthermore, the results we prove along the way will allow us to establish the bounds in part (c) as well.

\subsection{The upper bound construction}

In this range, in contrast to the large multiplicity setting, it is the upper bound that is straightforward. This bound follows from the following construction, which is valid for the full range of parameters.

\begin{lem}\label{lem:genupperbound}
Let $n,k,d$ be positive integers such that $n \ge d \ge 1$ and $k \ge 2$. Then
\begin{equation*}
f(n,k,d)\leq n + 2^d k - d - 2.
\end{equation*}
\end{lem}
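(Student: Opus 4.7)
The plan is to reduce the general claim to the hyperplane case $d=1$ via Lemma \ref{lem:hyperplanerecursion}, and then handle that case by an explicit inductive construction on $k$. Once we have established $f(n,k,1) \le n + 2k - 3$ for every $k \ge 2$ and $n \ge 1$, Lemma \ref{lem:hyperplanerecursion} immediately yields, for all $d \ge 2$,
\[
f(n,k,d) \;\le\; f(n-d+1,k,1) + 2k(2^{d-1}-1) \;\le\; (n - d + 2k - 2) + 2k(2^{d-1}-1) \;=\; n + 2^d k - d - 2,
\]
matching the $d=1$ bound in form. Thus everything reduces to constructing small $(k,1)$-covers.

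For the base case $k=2$ of the hyperplane argument, I propose the family $\m H_0$ consisting of the $n$ coordinate hyperplanes $H_{\vec{e}_i} = \{\vec{x} : x_i = 1\}$ (which already realise Jamison's extremal $(1,1)$-cover) together with the single parity hyperplane $H_{\vec{1}} = \{\vec{x} : \sum_i x_i = 1\}$, for a total of $n+1 = n + 2\cdot 2 - 3$ hyperplanes. A nonzero vector $\vec{x} \in \F{2}{n}$ of Hamming weight $w$ lies in precisely $w$ of the coordinate hyperplanes and in $H_{\vec{1}}$ exactly when $w$ is odd, so it is covered $w + [w\ \text{odd}] \ge 2$ times; the origin lies in none of the hyperplanes in $\m H_0$. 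Hence $\m H_0$ is indeed a $(2,1)$-cover of the required size.

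The inductive step is then immediate: given a $(k-1,1)$-cover of size at most $n + 2(k-1) - 3$, I would adjoin the pair of parallel hyperplanes $\{\vec{x} : x_1 = 0\}$ and $\{\vec{x} : x_1 = 1\}$. Since this pair partitions $\F{2}{n}$, it raises the covering multiplicity of every point by exactly one, yielding a multiset of $n + 2k - 3$ hyperplanes that covers each nonzero point at least $k$ times while covering $\vec{0}$ at most $(k-2)+1 = k-1$ times.

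The argument is elementary throughout, so I do not foresee a real obstacle; the only mildly clever ingredient is the parity hyperplane in the base case, which is what allows us to double-cover the standard basis vectors (the only points under-covered by Jamison's construction) with a single additional hyperplane while still avoiding the origin, and thereby save the ``$-2$'' relative to the naive bound $n + 2^d k - d - 1$ obtained by stacking Jamison with $k-1$ full partitions of the space.
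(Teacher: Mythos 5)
Your proof is correct and follows essentially the same route as the paper: the same base construction $\{H_{\vec{e}_i} : i \in [n]\} \cup \{H_{\vec{1}}\}$ for $(k,d)=(2,1)$, the same augmentation by parallel hyperplane pairs to reach general $k$ (done inductively rather than all at once, which is immaterial), and the same reduction to $d=1$ via Lemma~\ref{lem:hyperplanerecursion}.
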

\begin{proof}
We start by resolving the case $d=1$ and $k=2$, for which we consider the family of hyperplanes $\m H = \{ H_{\vec{e}_i} : i\in[n] \}\cup \{H_{\vec{1}}\}$, where $\vec{e}_i$ is the $i$th standard basis vector and $\vec{1}$ is the all-one vector. To see that this is a $(2,1$)-cover of $\F{2}{n}$, note first that the planes all avoid the origin. Next, if we have a nonzero vector $\vec{x}$, it is covered by the hyperplanes $\{ H_{\vec{e}_i} : i\in[n] \}$ as many times as it has nonzero entries. Thus, all vectors of Hamming weight at least two are covered twice or more. 
The only remaining vectors are those of weight one, which are covered once by $\{ H_{\vec{e}_i} : i\in[n] \}$, but these are all covered for the second time by $H_{\vec{1}}$. Hence $\m H$ is indeed a $(2,1)$-cover, and is of the required size, namely $n+1$.

Now we can extend this construction to the case $d=1$ and $k\geq 3$ by simply adding $k-2$ arbitrary pairs of parallel hyperplanes. The resulting family will be a $(k,1;k-2)$-cover (and hence, in particular, a $(k,1)$-cover) of size $n+2k-3$, matching the claimed upper bound.

That leaves us with the case $d \ge 2$, which we can once again handle by appealing to Lemma~\ref{lem:hyperplanerecursion}. In conjunction with the above construction, we have
\begin{equation*}
f(n,k,d)\leq f(n-d+1,k,1)+2k(2^{d - 1} - 1)\leq  n-d+1 +2k-3 + 2k(2^{d - 1} - 1),
\end{equation*}
which simplifies to the required $n + 2^d k - d - 2$.
\end{proof}

\subsection{Recursion, again}

The upper bound in Lemma~\ref{lem:genupperbound} is strictly increasing in $n$. Our next step is to show that this behaviour is necessary --- that is, the higher the dimension, the harder the space is to cover. Although intuitive, this fact turned out to be less elementary than expected, and our proof makes use of the probabilistic method.

\begin{lem} \label{lem:recursive}
Let $n,k,d,s$ be integers such that $n\ge 2$, $n \ge d \ge 1$, and $k > s \ge 0$. Then
\[ g(n,k,d;s) \ge g(n-1,k,d;s) + 1. \]
\end{lem}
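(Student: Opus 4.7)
The plan is to convert an optimal $(k,d;s)$-cover $\mathcal{H}$ of $\mathbb{F}_2^n$ of size $m := g(n,k,d;s)$ into a $(k,d;s)$-cover of $\mathbb{F}_2^{n-1}$ of size at most $m - 1$, via a random hyperplane section combined with the probabilistic method. Choose $\vec{w} \in \mathbb{F}_2^n \setminus \{\vec{0}\}$ uniformly at random and let $H = \{\vec{x} \in \mathbb{F}_2^n : \vec{x} \cdot \vec{w} = 0\} \cong \mathbb{F}_2^{n-1}$; there are $2^n - 1$ equally likely hyperplanes. For each $S = \vec{v} + V \in \mathcal{H}$ exactly one of the following occurs: case (I), $V \subseteq H$ but $\vec{v} \notin H$, so $S \cap H = \emptyset$; case (II), $V \subseteq H$ and $\vec{v} \in H$, so $S \cap H = S$ has codimension $d-1$ in $H$; or case (III), $V \not\subseteq H$, so $S \cap H$ has codimension $d$ in $H$.

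Construct $\mathcal{H}'$ from $\mathcal{H}$ by discarding every case-(I) member, keeping $S \cap H$ for every case-(III) member, and replacing each case-(II) member $S$ by any two codim-$d$ affine subspaces of $H$ that partition $S$ (possible since $S$ has dimension $n-d$ and any codim-$1$ affine subspace of $S$ together with its complement in $S$ form such a partition). Then $\mathcal{H}'$ is a multiset of codim-$d$ affine subspaces of $H$ of size $m - a + b$, where $a$ and $b$ denote the number of case-(I) and case-(II) subspaces of $\mathcal{H}$, respectively. Moreover, $\mathcal{H}'$ is a genuine $(k,d;s)$-cover of $H$: for every $\vec{x} \in H$, including the origin (since $\vec{0} \in H$), its multiplicity in $\mathcal{H}'$ equals its multiplicity in $\mathcal{H}$, because case-(I) members contain no point of $H$, case-(III) intersections cover the same points of $H$ as the originals, and the two halves of each case-(II) member partition the original subspace, so any fixed point is covered exactly once by that pair if and only if it was covered once by the original.

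It thus suffices to exhibit a $\vec{w}$ with $a - b \ge 1$. Set $X_S := \mathbf{1}[S\text{ in case (I)}] - \mathbf{1}[S\text{ in case (II)}]$ and compute $\mathbb{E}[X_S]$. If $S$ contains the origin (so $\vec{v} \in V$), then case (I) is impossible and case (II) occurs iff $\vec{w} \in V^\perp \setminus \{\vec{0}\}$, giving $\mathbb{E}[X_S] = -(2^d - 1)/(2^n - 1)$. If $S$ avoids the origin, then $\vec{v} \notin (V^\perp)^\perp = V$, so the linear functional $\vec{w} \mapsto \vec{v} \cdot \vec{w}$ is nontrivial on the $d$-dimensional space $V^\perp$; counting the vectors in its zero locus and its complement inside $V^\perp \setminus \{\vec{0}\}$ yields $\mathbb{E}[X_S] = 1/(2^n - 1)$. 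Since exactly $s$ members of $\mathcal{H}$ contain the origin,
\[
\mathbb{E}[a - b] \;=\; \sum_{S \in \mathcal{H}} \mathbb{E}[X_S] \;=\; \frac{m - 2^d s}{2^n - 1}.
\]

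The main substantive step is then to verify $m > 2^d s$. Lemma~\ref{lem:double-count} yields $m \ge 2^d k - \lfloor (k-s)/2^{n-d} \rfloor \ge 2^d k - (k-s)$, and hence $m - 2^d s \ge (2^d - 1)(k - s) \ge 1$ using $d \ge 1$ and $k > s$. Since $\mathbb{E}[a - b] > 0$ and $a - b$ is integer-valued, some $\vec{w}$ achieves $a - b \ge 1$, producing the desired $(k,d;s)$-cover of $\mathbb{F}_2^{n-1}$ of size at most $m - 1$. The main obstacles to watch out for are the correct bookkeeping of origin multiplicity when splitting case-(II) subspaces, and the positivity of $m - 2^d s$ when $s$ is close to $k - 1$; the latter is precisely where the assumption $d \ge 1$ becomes essential.
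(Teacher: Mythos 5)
Your proposal is correct and follows essentially the same argument as the paper: restrict an optimal cover to a uniformly random hyperplane through the origin, split the contained subspaces, discard the disjoint ones, compute $\mathbb{E}[a-b]=\frac{m-2^d s}{2^n-1}$ via the same case analysis, and invoke the double-counting bound of Lemma~\ref{lem:double-count} to get positivity. The only (harmless) difference is cosmetic: you bound $\lfloor (k-s)/2^{n-d}\rfloor \le k-s$ to get $(2^d-1)(k-s)\ge 1$, while the paper just notes $2^d(k-s)-\lfloor (k-s)/2^{n-d}\rfloor>0$ directly.
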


\begin{proof}
Let $\m H$ be an optimal $(k,d;s)$-cover of $\F{2}{n}$. To prove the lower bound on its size, we shall construct from it a $(k,d;s)$-cover $\m H'$ of $\F{2}{n-1}$, which must comprise of at least $g(n-1,k,d;s)$ subspaces. To obtain this cover of a lower-dimensional space, we restrict $\m H$ to a random hyperplane $H \subset \F{2}{n}$ that passes through the origin. Since $\m H$ is a $(k,d;s)$-cover of all of $\F{2}{n}$, it certainly covers $H \cong \F{2}{n-1}$ as well.

However, we require $\m H'$ to be a $(k,d;s)$-cover of $H$, which must be built of affine subspaces of codimension $d$ relative to $H$ --- that is, subspaces of dimension one less than those in $\m H$. Fortunately, when intersecting the subspaces $S \in \m H$ with a hyperplane, we can expect their dimension to decrease by one. The exceptional cases are when $S$ is disjoint from $H$, or when $S$ is contained in $H$. In the former case, $S$ does not cover any points of $H$, and can therefore be discarded from $\m H'$. In the latter case, we can partition $S$ into two subspaces $S = S_1 \cup S_2$, where each $S_i$ is of codimension $d$ relative to $H$, and replace $S$ with $S_1$ and $S_2$ in $\m H'$. By making these changes, we obtain a family $\m H'$ of codimension-$d$ subspaces of $H$. Moreover, these subspaces cover the points of $H$ exactly as often as those of $\m H$ do, and thus $\m H'$ is a $(k,d;s)$-cover of $H$.

When building this cover, though, we need to control its size. Let $X$ denote the set of subspaces $S \in \m H$ that are disjoint from $H$, and let $Y$ denote the set of subspaces $S \in \m H$ that are contained in $H$. We then have $\card{ \m H'} = \card{\m H} - \card{X} + \card{Y}$. The objective, then, is to show that there is a choice of hyperplane $H$ for which $\card{X} > \card{Y}$, in which case the cover $\m H'$ we build is relatively small.

Recall that $H$ was a random hyperplane in $\F{2}{n}$ passing through the origin, which is to say it has a normal vector $\vec{u}$ chosen uniformly at random from $\F{2}{n}\setminus \{\vec{0}\}$. To compute the expected sizes of $X$ and $Y$, we consider the probability that a subspace $S \in \m H$ is either disjoint from or contained in $H$.

Let $S\in \m H$ be arbitrary and suppose first that $\vec{0}\in S$. We immediately have $\mathbb{P}(S\in X)=0$, as in this case $\vec{0}\in S\cap H$, so $S$ and $H$ cannot be disjoint. On the other hand, $\mathbb{P}(S\in Y)=\frac{2^d-1}{2^{n}-1}$, as we have
$S\subseteq H$ exactly when the normal vector $\vec{u}$ is a nonzero element of the $d$-dimensional orthogonal
complement, $S^{\perp}$, of $S$ in $\F{2}{n}$.

In the other case, when $\vec{0}\notin S$, we can write $S$ in the form $T+\vec{v}$, where $\vec{0} \in T \subset \F{2}{n}$ is an $(n-d)$-dimensional vector subspace and $\vec{v} \in \F{2}{n}\setminus T$. Then $S$ is disjoint from $H$ if and only if $\vec{u} \in S^{\perp}$ and $\vec{u} \cdot \vec{v} = 1$. Since $\vec{v} \notin T$, these are independent conditions, and so we have $\mathbb{P}(S \in X)=\frac{2^{d-1}}{2^{n}-1}$. Similarly, in order to have $S \subseteq H$, $\vec{u}$ must be a nonzero vector satisfying $\vec{u} \in S^{\perp}$ and $\vec{u} \cdot \vec{v} = 0$, and so $\mathbb{P}(S\in Y)=\frac{2^{d-1}-1}{2^{n}-1}$.

Now, using linearity of expectation, we have
\begin{align*}
\mathbb{E} \left[ \card{X} - \card{Y} \right]
&= \sum_{S\in \m H} \left(\mathbb{P}(S\in X)-\mathbb{P}(S\in Y)\right)\\
&= \sum_{S\in \m H: \vec{0}\notin S} \left(\frac{2^{d-1}}{2^{n}-1}-\frac{2^{d-1}-1}{2^{n}-1}\right) + \sum_{S\in \m H: \vec{0}\in S} \left(0-\frac{2^d-1}{2^{n}-1}\right) \\
&=\frac{\card{\{S\in \m H: \vec{0}\notin S\}}-\left(2^d-1\right)\card{\{S\in \m H: \vec{0}\in S\}}}{2^n-1}
=\frac{\card{\m H} - 2^d s}{2^n-1},
\end{align*}
where we used the fact that $\m H$ is a $(k,d;s)$-cover, and thus $\card{\{S \in \m H: \vec{0} \in S \}} = s$. We now apply the lower bound on $\card{\m H}$ given by Lemma~\ref{lem:double-count} to obtain
\[ \mathbb{E} \left[ \card{X} - \card{Y} \right] \geq \frac{2^d k - \floor*{\frac{k-s}{2^{n-d}}} -2^d s}{2^n-1} = \frac{2^d (k-s) - \floor*{\frac{k-s}{2^{n-d}}}}{2^n-1}>0. \]
Therefore, there must be a hyperplane $H$ for which $\card{X} - \card{Y} \ge 1$. The corresponding cover of $H$ thus has size at most $\card{\m H} - 1$ but, as a $(k,d;s)$-cover of an $(n-1)$-dimensional space, has size at least $g(n-1,k,d;s)$. This gives $\card{\m H} - 1 \ge \card{\m H'} \ge g(n-1,k,d;s)$, whence the required bound, $g(n,k,d;s) = \card{\m H} \ge g(n-1,k,d;s) + 1$.
\end{proof}

While this inequality will be used in our proof of part (b) of Theorem~\ref{thm:main}, it also gives us what we need to prove the bounds in part (c).

\begin{proof}[Proof of Theorem~\ref{thm:main}(c)]
Lemma~\ref{lem:genupperbound} gives us the upper bound, $f(n,k,d) \le n + 2^d k - d -2$, which is in fact valid for all $k \ge 2$ and $n \ge d \ge 1$.

When $n \ge \floor*{\log_2 k} + d + 1$, we can prove the lower bound, $f(n,k,d) \ge n + 2^d k - d - \log_2 (2k)$, by induction on $n$. For the base case, when $n = \floor*{\log_2 k} + d + 1$, we appeal to Lemma~\ref{lem:double-count}, which gives
\[ f(n,k,d) \ge 2^d k - \floor*{\frac{k}{2^{n-d}}} = 2^d k = n + 2^d k - d - \floor*{\log_2 k} - 1 \ge n + 2^d k - d - \log_2 (2k). \]

For the induction step we appeal to Lemma~\ref{lem:recursive}. First note that the lemma gives $f(n,k,d) = \min_s g(n,k,d;s) \ge \min_s \left( g(n-1,k,d;s) + 1 \right) = f(n-1,k,d) + 1$. Thus, using the induction hypothesis, for all $n > \floor*{\log_2 k} + d + 1$ we have
\[ f(n,k,d) \ge f(n-1,k,d) + 1 \ge n-1 + 2^d k - d - \log_2(2k) + 1 = n + 2^d k - d - \log_2 (2k), \]
completing the proof.
\end{proof}

\subsection{A coding theory connection}\label{sec:codingconnection}

In Lemma~\ref{lem:recursive}, we proved a recursive bound on $g(n,k,d;s)$ that is valid for all values of $s$, the number of times the origin is covered. In this subsection, we establish the promised connection to coding theory, which is the key to our proof. Indeed, as observed in Corollary~\ref{cor:highcovering} below, it allows us to restrict our attention to only two feasible values of $s$.

We begin with $(k,1;0)$-covers of $\F{2}{n}$, showing that, in this binary setting, hyperplane covers that avoid the origin are in direct correspondence with linear codes of large minimum distance.

\begin{prop} \label{prop:strictcodeequiv}
A $(k,1;0)$-cover of $\F{2}{n}$ of cardinality $m$ is equivalent to an $n$-dimensional linear binary code of 
length $m$ and minimum distance at least $k$.
\end{prop}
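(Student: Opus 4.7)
The plan is to set up an explicit correspondence between $(k,1;0)$-covers of $\F{2}{n}$ and generator matrices of $n$-dimensional binary linear codes, using the observation that covering multiplicities translate directly into Hamming weights of codewords.

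First I would note that in a $(k,1;0)$-cover $\m H$, no hyperplane contains the origin, so every element can be uniquely written as $H_{\vec{u}}$ for some nonzero $\vec{u}\in\F{2}{n}$. A $(k,1;0)$-cover of cardinality $m$ is therefore the same data as a multiset $\{\vec{u}_1,\ldots,\vec{u}_m\}$ of nonzero vectors of $\F{2}{n}$, which I record as the columns of an $n\times m$ matrix $G$ over $\F{2}{}$. I then consider the linear map $\varphi:\F{2}{n}\to\F{2}{m}$ defined by $\varphi(\vec{x}) = \vec{x}^T G$, whose image is a linear code $C\subseteq \F{2}{m}$ of length $m$.

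The key identity is that the $i$-th coordinate of $\varphi(\vec{x})$ is $\vec{x}\cdot\vec{u}_i$, so the Hamming weight $\w(\varphi(\vec{x}))$ equals the number of indices $i$ for which $\vec{x}\in H_{\vec{u}_i}$, i.e.\ the covering multiplicity of $\vec{x}$ in $\m H$. Using this, I would translate the cover condition in both directions: $\m H$ covers every nonzero point at least $k$ times if and only if every nonzero $\vec{x}\in\F{2}{n}$ satisfies $\w(\varphi(\vec{x}))\ge k$. For $k\ge 1$ this also forces $\varphi$ to be injective (an element of the kernel would be covered $0<k$ times), so $\dim C = n$, and the condition on $\varphi$ is then exactly that $C$ has minimum distance at least $k$. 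Conversely, given such a code $C$, a generator matrix gives an $n\times m$ matrix $G$ whose columns $\vec{u}_i$ define hyperplanes $H_{\vec{u}_i}$ forming a $(k,1;0)$-cover via the same identity.

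I do not anticipate any substantial obstacle, since once the dictionary ``column of $G$ $\leftrightarrow$ hyperplane through non-origin'' is established, the statement is essentially a matter of bookkeeping. The only minor subtlety is that for the reverse direction the chosen generator matrix must have all nonzero columns in order to yield genuine hyperplanes; this is automatic for codes with no identically-zero coordinate, and I would handle it in a single remark noting that zero coordinates can be discarded without changing either the dimension or the minimum distance.
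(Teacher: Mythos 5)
Your proposal is correct and follows essentially the same route as the paper: record the normal vectors of the hyperplanes as the rows/columns of a matrix, identify the covering multiplicity of $\vec{x}$ with the Hamming weight of its image, and use linearity to pass between minimum nonzero weight and minimum distance, in both directions. Your explicit remarks on injectivity (giving dimension $n$) and on zero columns in the converse are minor refinements of the same argument, not a different approach.
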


\begin{proof}
Let $\m H = \{H_1, H_2, \hdots, H_m\}$ be a $(k,1;0)$-cover of $\F{2}{n}$. Since none of the hyperplanes cover 
the origin, for each $i \in [m]$, $H_i$ has to be described by the equation  $\vec{u}_i\cdot \vec{x} = 1$ for some 
$\vec{u}_i \in \F{2}{n} \setminus \{ \vec{0} \}$. Let $A$ be the $m \times n$ matrix whose rows are 
$\vec{u}_1, \vec{u}_2, \hdots, \vec{u}_m$. We claim that $A$ is the generator matrix of a linear binary code of dimension $n$, length $m$ and minimum distance at least $k$. Since each $\vec{x} \in \F{2}{n} \setminus \{ \vec{0} \}$ is covered by at least $k$ of the planes, it follows that the vector $A \vec{x}$ has weight at least $k$, which in turn is equivalent to the vectors in the column space of $A$ having minimum distance at least $k$. Indeed, any vector $\vec{y}$ in the column space can be expressed in the form $A \vec{w}$ for some $\vec{w} \in \F{2}{n}$. Thus, given two vectors $\vec{y}_1, \vec{y}_2$ in the column space, their difference is of the form $A(\vec{w}_1 - \vec{w}_2)$, where $\vec{x} = \vec{w}_1 - \vec{w}_2$ is nonzero. Hence this difference has weight at least $k$; i.e., the two vectors $\vec{y}_1$ and $\vec{y}_2$ have distance at least $k$.

Conversely, given a linear binary code of dimension $n$, length $m$ and minimum distance at least $k$, let $\vec{u}_1, \vec{u}_2, \hdots, \vec{u}_m$ be the rows of the generator matrix. By the same reasoning as above, the hyperplanes $H_i$, $i\in [m]$, defined by the equation $\vec{u}_i\cdot \vec{x} = 1$, form a $(k,1;0)$-cover of $\F{2}{n}$.
\end{proof}

Thus, the problem of finding a small $(k,1;0)$-cover of $\F{2}{n}$ corresponds to finding an $n$-dimensional linear code of minimum distance at least $k$ and small length. This is a central problem in coding theory and, as such, has been extensively studied. We can therefore leverage known bounds to bound the function $g(n,k,1;0)$. 

\begin{cor} \label{cor:bound}
For all $k \geq 2$ and $n \geq 1$,
\[ g(n, k,1;0)\geq n + \left\lfloor \frac{k - 1}{2} \right\rfloor \log \left( \frac{2n}{k-1} \right). \]
\end{cor}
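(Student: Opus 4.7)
The plan is to translate the problem into coding theory using Proposition~\ref{prop:strictcodeequiv} and then apply the classical Hamming (sphere-packing) bound to the resulting binary linear code. I would set $m = g(n,k,1;0)$ and $t = \lfloor (k-1)/2 \rfloor$. If $t = 0$, the claimed inequality reduces to the trivial $m \ge n$, so assume $t \ge 1$. By Proposition~\ref{prop:strictcodeequiv} there exists a linear binary code $C$ of length $m$, dimension $n$, and minimum distance at least $k \ge 2t+1$.

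The next step is to invoke the Hamming bound. Since the minimum distance of $C$ is at least $2t+1$, the Hamming balls of radius $t$ around distinct codewords are pairwise disjoint subsets of $\mathbb{F}_2^m$. Counting the $2^n$ such balls, each of size $\sum_{i=0}^{t}\binom{m}{i}$, yields $2^n \sum_{i=0}^{t} \binom{m}{i} \le 2^m$, and hence
\[ m - n \;\ge\; \log_2 \sum_{i=0}^{t} \binom{m}{i} \;\ge\; \log_2 \binom{m}{t}. \]

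I would then bound $\binom{m}{t}$ from below by the standard estimate $\binom{m}{t} \ge (m/t)^t$, which follows by writing $\binom{m}{t} = \prod_{i=1}^{t} \frac{m-t+i}{i}$ and observing that each factor is at least $m/t$ for $1 \le i \le t$. Combined with the trivial bound $m \ge n$ (since $C$ has dimension $n$), this gives $m - n \ge t \log_2(m/t) \ge t \log_2(n/t)$. To convert this into the exact form stated in the corollary, I would verify that $n/t \ge 2n/(k-1)$: when $k = 2t+1$ is odd both sides are equal, while when $k = 2t+2$ is even we have $2n/(k-1) = 2n/(2t+1) \le n/t$. Assembling the inequalities yields $g(n,k,1;0) \ge n + \lfloor (k-1)/2 \rfloor \log(2n/(k-1))$, as required.

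There is no real obstacle here: once the coding-theoretic equivalence from Proposition~\ref{prop:strictcodeequiv} is in hand, the proof is a direct application of the sphere-packing bound plus a routine binomial estimate. The only bit of care needed is the small parity case analysis at the end to match the precise constant $2n/(k-1)$ appearing in the statement rather than the marginally weaker $n/t$ that comes out of the Hamming bound on the nose.
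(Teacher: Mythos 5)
Your proposal is correct and follows essentially the same route as the paper: translating via Proposition~\ref{prop:strictcodeequiv} to a linear code of minimum distance at least $k$, applying the Hamming bound with $t=\lfloor (k-1)/2\rfloor$, estimating $\binom{m}{t}\ge (m/t)^t\ge (n/t)^t$ using $m\ge n$, and finishing with the same parity observation that $n/t\ge 2n/(k-1)$. Your explicit handling of the $t=0$ case is a minor tidiness improvement but not a different argument.
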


\begin{proof}
Let $\m H$ be an optimal $(k,1,0)$-cover and let $\m C \subseteq \F{2}{n}$ be the equivalent $n$-dimensional linear binary code of length $m=\card{\m H}$ and minimum distance at least $k$, as described in Proposition~\ref{prop:strictcodeequiv}. We can now appeal to the Hamming bound: since the code has minimum distance $k$, the balls of radius $t = \floor*{\frac{k-1}{2}}$ around the $2^n$ points of $\m C$ must be pairwise disjoint. As each ball has size $\sum_{i=0}^t \binom{m}{i}$, and the ambient space has size $2^m$, we get
\begin{equation*}
2^n\leq \frac{2^m}{\sum_{i=0}^t \binom{m}{i}}.
\end{equation*}
We bound the denominator from below by
\begin{equation*}
\sum_{i=0}^t \binom{m}{i}\geq \binom{m}{t} \ge \left( \frac{m}{t} \right)^t \ge \left( \frac{n}{t} \right)^t = 2^{t \log \tfrac{n}{t}},
\end{equation*}
where the last inequality is valid provided $m\geq n$, as it must be. Thus we conclude
\begin{equation*}
g(n,k,1;0)=|\m H|=m \ge n + t \log \tfrac{n}{t}\geq n + \left\lfloor \frac{k - 1}{2} \right\rfloor \log \left( \frac{2n}{k-1} \right). \qedhere
\end{equation*}
\end{proof}

\begin{rem} \label{rem:gvbound}
Although it may seem that some of our bounds might be wasteful, one can deduce upper bounds from the Gilbert-Varshamov bound, which is obtained by considering a random linear code. In particular, if $n$ is large with respect to $k$, one finds that $g(n, k,1;0) \le n + (k - 1) \log(2n)$. 
Narrowing the gap between these upper and lower bounds remains an active area of research in coding theory. 
\end{rem}

The above lower bound can be used to show that if $n$ is large with respect to $k$ and $d$ then every optimal $(k,d)$-cover has to cover the origin many times. This corollary is critical to our proof of the upper bound. 

\begin{cor}\label{cor:highcovering}
If $n >2^{2^d k -k-d+ 1}$ then any optimal $(k,d)$-cover of $\mathbb{F}_2^n$ covers the origin at least $k-2$ times.
\end{cor}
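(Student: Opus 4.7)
The plan is to argue by contradiction: assume $\m H$ is an optimal $(k,d;s)$-cover with $s \le k-3$, so that $|\m H| = f(n,k,d) \le n + 2^d k - d - 2$ by Lemma~\ref{lem:genupperbound}, and then derive a lower bound on $|\m H|$ that exceeds this by reducing to the coding-theoretic setting governed by Corollary~\ref{cor:bound}.

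The first step is a two-stage reduction from $(k,d;s)$-covers to hyperplane covers avoiding the origin. I would first discard the $s$ subspaces of $\m H$ that pass through $\vec{0}$; since each nonzero point was covered at least $k$ times and these removed subspaces contribute at most $s$ to its coverage, the remaining $|\m H|-s$ codimension-$d$ subspaces still cover every nonzero point at least $k-s$ times and avoid the origin. Next, each such subspace $S=T+\vec{v}$ has $\vec{v}\notin T$, so the map $\vec{u}\mapsto \vec{u}\cdot\vec{v}$ on $T^\perp$ is surjective onto $\mathbb{F}_2$, which lets me pick a hyperplane $H_S\supseteq S$ with $\vec{0}\notin H_S$. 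Replacing each $S$ by the corresponding $H_S$ yields a $(k-s,1;0)$-cover of $\F{2}{n}$ of size $|\m H|-s$, so
\[
|\m H| \ge s + g(n,k-s,1;0).
\]

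The second step is to push the minimum distance down to $3$, where Corollary~\ref{cor:bound} becomes effective. The key observation is that, for any $u\ge 2$, deleting a single hyperplane from a $(u,1;0)$-cover drops the coverage of every nonzero point by at most one, so what remains is a $(u-1,1;0)$-cover; consequently $g(n,u,1;0)\ge g(n,u-1,1;0)+1$. Iterating from $u=k-s$ down to $u=3$ (which is valid since $s\le k-3$ forces $k-s\ge 3$), I obtain $g(n,k-s,1;0)\ge g(n,3,1;0)+(k-s-3)$. Combining this with Corollary~\ref{cor:bound} at $k=3$, which gives $g(n,3,1;0)\ge n+\log_2 n$, one gets
\[
|\m H| \ge s + (k-s-3) + n + \log_2 n = n + \log_2 n + k - 3.
\]

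Finally, the hypothesis $n > 2^{2^d k - k - d + 1}$ rearranges to $\log_2 n > 2^d k - k - d + 1$, and substituting gives
\[
|\m H| > n + (2^d k - k - d + 1) + k - 3 = n + 2^d k - d - 2 \ge f(n,k,d),
\]
contradicting the optimality of $\m H$. The only mildly subtle step is the replacement of each codimension-$d$ subspace by a single containing hyperplane that avoids the origin; once this is set up, the rest is a clean chain of monotonicity arguments followed by a single invocation of the Hamming-type lower bound. Notably, the ``$+1$ per distance reduction'' afforded by the hyperplane-removal trick is precisely what makes the hypothesis on $n$ independent of which $s\in\{0,1,\ldots,k-3\}$ one is trying to rule out.
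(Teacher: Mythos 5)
Your proposal is correct and follows essentially the same route as the paper: reduce to a multiplicity-$3$ hyperplane cover avoiding the origin, apply the Hamming-type bound of Corollary~\ref{cor:bound}, and contradict the upper bound of Lemma~\ref{lem:genupperbound} using $\log_2 n > 2^d k - k - d + 1$. The only cosmetic difference is that the paper deletes $k-3$ subspaces at once to obtain a $(3,d;0)$-cover before extending to hyperplanes, whereas you extend first and then lower the multiplicity via the iterated one-hyperplane-deletion step; both yield the same bound $n + \log_2 n + k - 3$.
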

\begin{proof}
Let $S_1, \dots, S_m$ be an optimal $(k,d)$-cover, and, if necessary, relabel the subspaces so that $S_1, \dots, S_s$ are the affine subspaces covering the origin. Suppose for a contradiction that $s \leq k - 3$, and observe that if we delete the first $k-3$ subspaces, each nonzero point must still be covered at least thrice, while the origin is left uncovered. That is, $S_{k-2}, S_{k-1}, \hdots, S_m$ forms a $(3,d;0)$-cover of $\F{2}{n}$. 

For each $k-2 \le j \le m$, we can then extend $S_j$ to an arbitrary hyperplane $H_j$ that contains $S_j$ and avoids the origin. Then $\{H_{k-2}, H_{k-1}, \hdots, H_m\}$ is a $(3,1;0)$-cover, and hence $m-k+3 \ge g(n,3,1;0)$. 

By Corollary~\ref{cor:bound}, this, together with the assumption $n >2^{2^d k -k-d+ 1}$, implies
\begin{equation*}
f(n,k,d)=m\geq g(n,3,1;0)+k-3\geq n +\log n + k-3>n + 2^d k -k-d+ 1+k-3= n + 2^d k - d - 2,
\end{equation*}
which contradicts the upper bound from Lemma~\ref{lem:genupperbound}.
\end{proof}

\begin{rem}
Observe that Corollary~\ref{cor:highcovering} in fact gives us some stability for large dimensions. If $n = 2^{2^d k - k - d + \omega(1)}$, then the above calculation shows that any $(k,d)$-cover that covers the origin at most $k-3$ times has size at least $n + 2^d k + \omega(1)$. Thus, when $n = 2^{2^d k - k - d + \omega(1)}$, any $(k,d)$-cover that is even close to optimal must cover the origin at least $k-2$ times.
\end{rem}

\subsection{The lower bound}

By Corollary~\ref{cor:highcovering}, when trying to bound $f(n,k,d) = \min_s g(n,k,d;s)$ for large $n$, we can restrict our attention to $s \in \{k-2,k-1\}$. First we deal with the latter case.

\begin{lem}\label{lem:k-1}
Let $n,k,d$ be positive integers such that $n \ge d \ge 1$. Then
\begin{equation*}
g(n,k,d;k-1)= n + 2^d k - d - 1.
\end{equation*}
\end{lem}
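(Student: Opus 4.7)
The plan is to prove matching upper and lower bounds on $g(n,k,d;k-1)$, with the upper bound obtained by slightly modifying the construction used for Lemma~\ref{lem:genupperbound}, and the lower bound obtained by iterating the recursive inequality of Lemma~\ref{lem:recursive} down to the trivial base case $n=d$. None of the ingredients is particularly delicate, so the proof should be short.

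For the upper bound, observe that the construction inside the proof of Lemma~\ref{lem:genupperbound} actually produces, in the hyperplane case, a $(k,1;k-2)$-cover of size $n+2k-3$ (the basic $(2,1;0)$-cover of size $n+1$ augmented with $k-2$ pairs of parallel hyperplanes, each pair contributing exactly one hit to the origin). To push the origin's multiplicity up to $k-1$, I would simply add one more hyperplane through the origin; this preserves the fact that every nonzero point is still covered at least $k$ times, and gives $g(n,k,1;k-1)\le n+2k-2$. For general $d\ge 2$, the bound $g(n,k,d;k-1)\le n+2^d k - d - 1$ then follows immediately by plugging into the recursion of Lemma~\ref{lem:hyperplanerecursion} with $s=k-1$, using
\[
g(n,k,d;k-1) \le g(n-d+1,k,1;k-1) + 2k(2^{d-1}-1) \le (n-d+2k-1) + (2^d k - 2k) = n + 2^d k - d - 1.
\]

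For the lower bound, I would induct on $n$ using Lemma~\ref{lem:recursive}, which tells us $g(n,k,d;k-1) \ge g(n-1,k,d;k-1) + 1$ whenever $n\ge \max(2,d+1)$. Iterating this down to the base case $n=d$ yields $g(n,k,d;k-1) \ge g(d,k,d;k-1) + (n-d)$. At $n=d$, the codimension-$d$ affine subspaces are just single points, so a $(k,d;k-1)$-cover must consist of $k$ copies of each of the $2^d-1$ nonzero points together with $k-1$ copies of $\vec{0}$, giving exactly $(2^d-1)k + (k-1) = 2^d k - 1$ subspaces. Adding the $n-d$ gain from the recursion produces $g(n,k,d;k-1) \ge n + 2^d k - d - 1$, matching the upper bound.

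The only step that required any work in the surrounding paper was Lemma~\ref{lem:recursive} itself; given that lemma, everything here is bookkeeping, so I do not anticipate any real obstacle. The one thing to be careful about is simply checking that the recursive construction of Lemma~\ref{lem:hyperplanerecursion} really does preserve the value of $s$ (it does, since the extra $2k(2^{d-1}-1)$ subspaces all lie outside the chosen subspace $S_0 \ni \vec{0}$), and that Lemma~\ref{lem:recursive}'s hypotheses allow the induction to proceed all the way to $n=d$.
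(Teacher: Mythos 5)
Your argument is correct (for $k\ge 2$), and its lower-bound half is exactly the paper's: iterate Lemma~\ref{lem:recursive} with $s=k-1$ down to the point-cover base case $g(d,k,d;k-1)=2^dk-1$, gaining one per dimension. Where you genuinely diverge is the upper bound. The paper proves the exact one-step recursion $g(n+1,k,d;k-1)=g(n,k,d;k-1)+1$ by lifting an optimal $(k,d;k-1)$-cover of $\F{2}{n}$ to $\F{2}{n+1}$ --- replacing each $S$ by $S\times\{0,1\}$ and adding a single codimension-$d$ subspace through $(\vec{0},1)$ that avoids the origin --- and then inducts up from $n=d$; you instead build the cover in one shot, starting from the $(k,1;k-2)$-cover of size $n+2k-3$ inside the proof of Lemma~\ref{lem:genupperbound}, adding one hyperplane through the origin to reach multiplicity exactly $k-1$ there, and passing to $d\ge 2$ via Lemma~\ref{lem:hyperplanerecursion} with $s=k-1$ (your check that this recursion preserves $s$, because the $2k(2^{d-1}-1)$ added subspaces avoid the origin, is the right point to verify). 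Your route is shorter and reuses existing constructions; the paper's route buys the cleaner structural fact that $g(n,k,d;k-1)$ increases by exactly one per dimension, matching the lower-bound recursion, and it also covers $k=1$. That is the one caveat in your write-up: the lemma as stated allows $k=1$, where ``add $k-2$ pairs of parallel hyperplanes'' is vacuous, so the upper bound $g(n,1,d;0)\le n+2^d-d-1$ needs a separate word --- it is just Jamison's construction, already cited in the paper --- a trivial omission, but one worth a sentence.
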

\begin{proof}
To prove the statement, we will show that, for all positive integers $n,k,d$ with $n \ge d \ge 1$, we have $g(n+1,k,d;k-1) = g(n,k,d;k-1)+1$. Combined with the simple observation that $g(d,k,d;k-1) =2^d k - 1$ for all $k \ge 1$, since when $d = n$ we are covering with individual points, this fact will indeed imply the desired result. 

By Lemma~\ref{lem:recursive} we know that $g(n+1,k,d;k-1) \geq g(n,k,d;k-1)+1$. For the other inequality, consider an optimal $(k,d;k-1)$-cover $\mathcal{H}$ of $\mathbb{F}_2^n$. For every $S\in \m H$, let $S'=S\times \{0,1\}$, which is a codimension-$d$ affine subspace of $\F{2}{n+1}$, and let $S_0$ be any $(n + 1 - d)$-dimensional affine subspace of $\F{2}{n+1}$ that contains the vector $(0,\hdots,0,1)$ but avoids the
origin. We claim that $\m H'=\{ S' : S\in \m H\}\cup\{S_0\}$ is a $(k,d;k-1)$-cover of $\F{2}{n+1}$. 
Indeed, for all $S\in \m H$, a point of the form $(\vec{x},t)$ is covered by $S'$ if and only if $\vec{x}$ 
is covered by $S$. Hence, the collection $\{S' : S\in \m H\}$ covers $\vec{0}$ exactly $k-1$ 
times and each point of the form $(\vec{x},t)$ with $\vec{x} \neq \vec{0}$ at least $k$ times. Finally, 
the point $(\vec{0},1)$ is covered $k-1$ times by the $\{S' : S\in \m H\}$ and once by the 
subspace $S_0$, so it is also covered the correct number of times. Hence $\m H'$ is indeed a
$(k,d;k-1)$-cover of of size $|\m H|+1$, and so the second inequality follows.
\end{proof}

\begin{rem}
Recall that the special case of $d = 1$, $g(n, k, 1; k - 1) = n + 2k - 2$, also follows from~\cite[Theorem 1.5]{SW20}. 
\end{rem}

The proof of Theorem~\ref{thm:main}(b) is now straightforward.

\begin{proof}[Proof of Theorem~\ref{thm:main}(b)]
The upper bound is given by Lemma~\ref{lem:genupperbound}. For the lower bound, first observe that for any valid choice of the parameters, we have $g(n,k,d;s+1)\leq g(n,k,d;s)+1$, as adding any subspace
containing the origin to a $(k,d;s)$-cover yields a $(k,d;s+1)$-cover. Then, by Corollary~\ref{cor:highcovering} and Lemma~\ref{lem:k-1}, we obtain
\begin{equation*}
f(n,k,d)=\min \{g(n,k,d;k-2),g(n,k,d;k-1)\}\geq g(n,k,d;k-1)-1=n + 2^d k - d - 2,
\end{equation*}
as desired.
\end{proof}


\section{The transition}
\label{sec:mid}

Parts (a) and (b) of Theorem~\ref{thm:main} determine the function $f(n,k,d)$ exactly in the two extreme ranges of the parameters --- when $k$ is exponentially large with respect to $n$, and when $n$ is exponentially large with respect to $k$. As remarked upon in the introduction, we know that in the former case, the bound on $k$ is best possible. However, that is not true for part (b), and we believe the upper bound of Lemma~\ref{lem:genupperbound} should be tight for much smaller values of $n$ as well.

In this section we explore the transition between these two ranges, with an eye towards better understanding when this upper bound becomes tight. As we saw in Lemma~\ref{lem:hyperplanerecursion}, for our upper bounds we can generally reduce to the hyperplane setting, and so we shall focus on the $d=1$ case in this section. To simplify notation, we will refer to a $(k,1)$-cover as a $k$-cover and write $f(n,k)$ instead of $f(n,k,1)$.

In this hyperplane setting, the upper bound of Lemma~\ref{lem:genupperbound}, valid for all $n \ge 1$ and $k \ge 2$, has the simple form $n + 2k - 3$. Given some fixed $k$, suppose the bound is tight for some $n_0$; that is, $f(n_0,k) = n_0 + 2k - 3$. The recursion of Lemma~\ref{lem:recursive} implies $f(n,k) \ge f(n-1,k) + 1$ for all $n \ge 2$, and so these two bounds together imply $f(n,k) = n + 2k - 3$ for all $n \ge n_0$. Hence, for every $k$, there is a well-defined threshold $n_0(k)$ such that $f(n,k) = n + 2k - 3$ if and only if $n \ge n_0(k)$. Theorem~\ref{thm:main}(b) shows $n_0(k) \le 2^k + 1$, and our goal now is to explore the true behaviour of this threshold.

\subsection{The diagonal case}

As a natural starting point, one might ask what lower bound we can provide for $n_0(k)$. From our previous results, in particular Theorem~\ref{thm:main}(a), we have seen that $f(n,k)$ behaves differently when $k$ is large compared to $n$. We therefore know the upper bound of Lemma~\ref{lem:genupperbound} is not tight when $k \ge 2^{n-2}$ or, equivalently, we know $n_0(k) > \log_2 k + 2$. However, the following construction, valid when $k \ge 4$, shows that we can improve upon Lemma~\ref{lem:genupperbound} for considerably larger values of $n$ as well.

\begin{prop} \label{prop:g2diagonal}
For all $k \ge 4$, we have $f(k,k) \le 3k - 4$. As a consequence, $n_0(k) \ge k+1$.
\end{prop}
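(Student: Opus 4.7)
The plan is to exhibit, for each $k \geq 4$, a $(k,1)$-cover of $\F{2}{k}$ of size at most $3k-4$. The consequence $n_0(k) \geq k+1$ then follows immediately from the definition: since $f(k,k) \leq 3k-4 < 3k-3 = k + 2k - 3$, the value $n = k$ cannot satisfy $f(n,k) = n + 2k - 3$, so by the monotonicity discussion preceding the proposition the threshold $n_0(k)$ must strictly exceed $k$.

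For the base case $k = 4$, I would invoke Proposition~\ref{prop:strictcodeequiv}: the extended Hamming $[8,4,4]$ binary linear code corresponds directly to a $(4,1;0)$-cover of $\F{2}{4}$ of size exactly $8 = 3\cdot 4 - 4$, which serves as the base case.

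For $k \geq 5$, my plan is to proceed by induction on $k$, aiming to prove $f(k,k) \leq f(k-1,k-1) + 3$. Given a $(k-1,1)$-cover $\m H$ of $\F{2}{k-1}$ of size $f(k-1,k-1)$ with origin multiplicity $s$, I would lift each $S \in \m H$ to the hyperplane $S \times \F{2}{} \subset \F{2}{k}$; the lifted family already covers every nonzero $(\vec{x},t)$ at least $k-1$ times, but covers the new nonzero point $(\vec{0},1)$ only $s$ times. I would then append three hyperplanes with normals of the form $(\vec{a}_i, 1) \in \F{2}{k}$ --- so each covers $(\vec{0},1)$ but not the origin --- chosen to satisfy the linear dependence $\vec{a}_1 + \vec{a}_2 + \vec{a}_3 = \vec{0}$. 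This ensures that for every nonzero $\vec{x} \in \F{2}{k-1}$, the triple $(\vec{a}_i \cdot \vec{x})_{i=1,2,3}$ is never the all-ones vector, so the points $(\vec{x},1)$ all gain an extra covering, while the points $(\vec{x},0)$ are boosted whenever at least one $\vec{a}_i \cdot \vec{x} = 1$.

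The main obstacle is twofold. First, the three added planes each contribute exactly $+1$ to the coverage of $(\vec{0},1)$, so the inductive step requires $s \geq k-3$ in $\m H$, whereas the base-case cover from the extended Hamming code has $s = 0$; hence the naive inductive hypothesis ``$f(k,k) \leq 3k-4$'' is too weak and would need to be strengthened to track the origin multiplicity. Second, the points $(\vec{x},0)$ with $\vec{x}$ in the codimension-$2$ subspace $\vec{a}_1^\perp \cap \vec{a}_2^\perp$ receive no boost from the added planes and must already enjoy coverage at least $k$ in the lifted family, that is, the slack set of $\m H$ must contain a full codimension-$2$ linear subspace minus the origin. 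Since slack sets may fail to contain any such subspace (for instance, when they are sum-free), the delicate part of the proof is showing that the appropriate refined induction can always be carried through, perhaps by averaging over choices of $\vec{a}_1,\vec{a}_2$ or by constructing the starting cover with additional structure to guarantee enough slack.
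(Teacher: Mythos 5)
Your base case is fine: the extended Hamming $[8,4,4]$ code does give, via Proposition~\ref{prop:strictcodeequiv}, a $(4,1;0)$-cover of $\F{2}{4}$ of size $8=3\cdot 4-4$, and the deduction $n_0(k)\ge k+1$ from $f(k,k)\le 3k-4<k+2k-3$ is correct. However, the inductive step $f(k,k)\le f(k-1,k-1)+3$ is not established, and the two obstacles you name yourself are genuine, not technicalities. First, the lifted family covers $(\vec{0},1)$ only $s$ times and your three added planes contribute exactly $3$, so you need $s\ge k-3$; your base cover has $s=0$, so the induction already fails at $k=5$, and you offer no mechanism for building up origin multiplicity without spending two extra planes per unit of $s$ (which destroys the $3k-4$ budget). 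Second, every point $(\vec{x},0)$ with $\vec{x}$ in the codimension-$2$ subspace $\vec{a}_1^{\perp}\cap\vec{a}_2^{\perp}$ receives no boost, so the previous cover would need surplus coverage on an entire codimension-$2$ subspace minus the origin; nothing in the inductive hypothesis guarantees such structure, and you explicitly leave open how to obtain it (``perhaps by averaging\ldots or by constructing the starting cover with additional structure''). As written, this is a proof plan with acknowledged holes, not a proof.

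For comparison, the paper avoids induction entirely and gives a single explicit construction in $\F{2}{k}$: take $\m H_1=\{H_{\vec{e}_i}:i\in[k]\}$, $\m H_2=\{H_{\vec{1}-\vec{e}_i}:i\in[k]\}$, and $k-4$ copies of the hyperplane $\vec{x}\cdot\vec{1}=0$. A point of weight $w$ is covered $w$ times by $\m H_1$; if $w$ is odd it is covered $k-w$ times by $\m H_2$ (total exactly $k$), and if $w$ is even (so $w\ge 2$) it is covered $w$ times by $\m H_2$ and $k-4$ times by the parity planes (total $2w+k-4\ge k$), while the origin lies only on the $k-4$ parity planes. This gives $f(k,k)\le 3k-4$ directly for all $k\ge 4$, with no need to track origin multiplicities or slack sets. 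If you want to salvage your approach, you would need to strengthen the induction to carry both a lower bound on the origin multiplicity and a guaranteed codimension-$2$ subspace of doubly-covered points, which is precisely the hard part you have deferred.
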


\begin{proof}
To prove the upper bound, we must construct a $k$-cover $\m H$ of $\F{2}{k}$ of size $3k-4$. Letting $\vec{e}_i$ denote the $i$th standard basis vector and $\vec{1}$ the all-one vector, we take $\m H = \m H_1 \cup \m H_2 \cup \m H_3$, where $\m H_1 = \big\{ H_{\vec{e}_i} : i \in [k] \big\}$, $\m H_2 = \big\{ H_{\vec{1}-\vec{e}_i} : i \in [k] \big\}$, and $\m H_3$ consists of $k-4$ copies of the hyperplane with equation $\vec{x}\cdot \vec{1}=0$. Then $\m H$ has size $3k-4$, while the only planes containing the origin are those in $\m H_3$. Thus it only remains to verify that each nonzero point is covered at least $k$ times.

Given a nonzero point $\vec{x}$, let its weight be $w$. We then see that $\vec{x}$ is covered $w$ times by the planes in $\m H_1$. Next, observe that $\vec{x} \cdot \left( \vec{1} - \vec{e}_i \right)$ is equal to $w$ if $x_i = 0$, and is equal to $w - 1$ otherwise. Hence, if $w$ is odd, then $\vec{x}$ is covered by $k - w$ planes in $\m H_2$, and is thus covered at least $k$ times by $\m H$.

On the other hand, if $w$ is even, then $\vec{x}$ is covered $w$ times by the planes in $\m H_2$. However, in this case $\vec{x} \cdot \vec{1} = 0$, and so $\vec{x}$ is covered $k-4$ times by $\m H_3$ as well. In total, then, $\vec{x}$ is covered $2w + k - 4$ times. As $\vec{x}$ is a nonzero vector of even weight, we must have $w \ge 2$, and hence $\vec{x}$ is covered at least $k$ times in this case as well.

In conclusion, we see that $\m H$ forms a $k$-cover of $\F{2}{k}$, and thus $f(k,k) \le \card{\m H} = 3k - 4$. As this is smaller than the upper bound of Lemma~\ref{lem:genupperbound}, it follows that $n_0(k) \ge k+1$.
\end{proof}

\subsection{Initial values}

This still leaves us with a large range of possible values for $n_0(k)$: our lower bound is linear, while our upper bound is exponential. To get a better feel for which bound might be nearer to the truth, we next decided to take a closer look at $f(n,k)$ for small values of the parameters.

To be able to compute a number of these values efficiently, it helped to appeal to our recursive bounds. Lemma~\ref{lem:recursive} already restricts the behaviour of $f(n,k)$ as $n$ changes, showing that the function must be strictly increasing in $n$. It is also very helpful to understand how $f(n,k)$ responds to changes in $k$: as the following lemma shows, there is even less flexibility here.

\begin{lem} \label{lem:krecursion}
For all $n \ge 1$ and $k \ge 2$ we have $f(n,k-1) + 1 \le f(n,k) \le f(n,k-1) + 2$.
\end{lem}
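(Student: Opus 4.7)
\textbf{Proof proposal for Lemma~\ref{lem:krecursion}.}

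The plan is to prove the two inequalities by very direct manipulations of covers: to get the upper bound we will \emph{add} two carefully chosen hyperplanes to an optimal $(k-1,1)$-cover, and to get the lower bound we will \emph{delete} one carefully chosen hyperplane from an optimal $(k,1)$-cover.

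For the upper bound $f(n,k) \le f(n,k-1)+2$, I would start with an optimal $(k-1,1)$-cover $\m H$ of $\F{2}{n}$, which by definition covers every nonzero point at least $k-1$ times and the origin at most $k-2$ times. Pick any nonzero vector $\vec u$ and augment $\m H$ by the parallel pair of hyperplanes $\{\vec x : \vec x\cdot \vec u=0\}$ and $H_{\vec u}=\{\vec x : \vec x\cdot \vec u=1\}$. Because these two hyperplanes partition $\F{2}{n}$, the coverage of every single point of $\F{2}{n}$ increases by exactly one. Consequently, every nonzero point is now covered at least $k$ times, while the origin is covered at most $k-2+1=k-1$ times. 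Hence the enlarged family is a $(k,1)$-cover of size $f(n,k-1)+2$, establishing the upper bound.

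For the lower bound $f(n,k)\ge f(n,k-1)+1$, I would take an optimal $(k,1)$-cover $\m H$ of $\F{2}{n}$, and let $s$ be the number of hyperplanes in $\m H$ that contain the origin, so $0\le s\le k-1$. We will delete a single hyperplane to form a $(k-1,1)$-cover of size $f(n,k)-1$. If $s\ge 1$, delete one hyperplane that contains the origin: the origin is now covered $s-1\le k-2$ times, and each nonzero point is still covered at least $k-1$ times. If $s=0$, delete any hyperplane: the origin remains covered $0\le k-2$ times (this is where we use $k\ge 2$), and each nonzero point is again covered at least $k-1$ times. In either case we obtain a $(k-1,1)$-cover of size $f(n,k)-1$, so $f(n,k-1)\le f(n,k)-1$.

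There is no real obstacle here; the only things one has to notice are that a parallel pair of binary affine hyperplanes partitions $\F{2}{n}$ (so it uniformly boosts multiplicity by one), and that the condition $k\ge 2$ is exactly what makes the $s=0$ case of the lower bound go through without modification. Both tasks reduce to writing down the appropriate one-line multiplicity counts.
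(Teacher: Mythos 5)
Your proposal is correct and follows essentially the same route as the paper: the lower bound by deleting a hyperplane through the origin (or an arbitrary one if none exists, using $k\ge 2$), and the upper bound by appending a parallel pair of hyperplanes, which raises every point's multiplicity by exactly one. Your write-up just spells out the multiplicity bookkeeping in slightly more detail.
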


\begin{proof}
For the lower bound, observe that, given a $k$-cover of size $f(n,k)$, removing a hyperplane covering the origin (or, if no such plane exists, an arbitrary plane) leaves us with a $(k-1)$-cover, and thus $f(n,k-1) \le f(n,k) -1$.

For the upper bound, given a $(k-1)$-cover of size $f(n,k-1)$, we can add an arbitrary pair of parallel hyperplanes to obtain a $k$-cover. Thus $f(n,k) \le f(n,k-1) + 2$.
\end{proof}

Thus, if we know the value of $f(n,k-1)$, there are only two possible values for $f(n,k)$. This becomes even more powerful when used in combination with Lemma~\ref{lem:recursive}, which guarantees $f(n,k) \ge f(n-1,k) + 1$. Hence, in case we have $f(n-1,k) = f(n,k-1) + 1$, the only possible value for $f(n,k)$ is $f(n,k-1) + 2$.

Although this may seem a very conditional statement, this configuration occurs quite frequently, as one can see in Table~\ref{tab:uptosix} below, and allows us to deduce several values of $f(n,k)$ for free. This observation, together with our previous bounds (and noting that $f(n,2) = n+1$), allows us to almost completely determine $f(n,k)$ for $n \le 6$. We were able to fill in the few outstanding values through a computer search (using SageMath~\cite{sage} and Gurobi~\cite{gurobi}).\footnote{Some of these values we first proved by hand, via direct case analysis. However, as we do not see any more broadly applicable generalisation of the arguments therein, we have omitted these proofs.}

\begin{table}[h]
\centering
\begin{tabular}{c|c|c|c|c|c|c|c|c|c|c|c|c|c|c|c}
\backslashbox{n}{k} & 3 & 4  & 5  & 6  & 7  & 8 & 9 & 10 & 11 & 12 & 13 & 14 & 15 & 16 & $\cdots$\\[0.1cm]
\hline
& & & & & & & & & & & & & & & \\[-0.3cm]
3 & \green{6*} & \green{7} & \green{9}  & \green{11} & \green{13} & \green{14} & \green{16} & \green{18} & \green{20} & \green{21} & \green{23} & \green{25} & \green{27} & \green{28} & \green{$\cdots$}\\[0.1cm]
\hline
& & & & & & & & & & & & & & & \\[-0.3cm]
4 & \blue{7*} & \green{8}  & \green{10} & \green{12} & \green{14} & \green{15} & \green{17} & \green{19} & \green{21} & \green{23} & \green{25} & \green{27} & \green{29} & \green{30} & \green{$\cdots$}\\[0.1cm]
\hline
& & & & & & & & & & & & & & & \\[-0.3cm]
5 & \blue{8*} & \red{10*} & \orange{11} & \blue{13} & \blue{15} & \green{16} & \green{18} & \green{20} & \green{22} & \green{24} & \green{26} & \green{28} & \green{30} & \green{31} & \green{$\cdots$} \\[0.1cm] 
\hline
& & & & & & & & & & & & & & & \\[-0.3cm]
6 & \blue{9*}  &  \blue{11*}  & \red{13*} &  \orange{14}  & \blue{16} & \red{18}  & \red{20} & \red{22} & \red{23} & \blue{25} & \blue{27} & \blue{29} & \blue{31} & \green{32} & \green{$\cdots$} \\[0.1cm] 

\end{tabular}
\caption{$f(n,k)$ for $3 \le n \le 6$: values in \green{green} come from Theorem~\ref{thm:main}(a), values in \blue{blue} are a consequence of the recursive bounds, values in \orange{orange} follow from Proposition~\ref{prop:g2diagonal}, and values in \red{red} were obtained by a computer search. An asterisk denotes values equal to the upper bound of Lemma~\ref{lem:genupperbound}; that is, where $n \ge n_0(k)$.
}
\label{tab:uptosix}
\end{table}

\subsection{The extended Golay code}

We see from Table~\ref{tab:uptosix} that $n_0(k) = k+1$ for $k \in \{4,5\}$, leading some credence to the belief that the construction from Proposition~\ref{prop:g2diagonal} is perhaps indeed the last time the upper bound from Lemma~\ref{lem:genupperbound} can be improved. However, we can once again exploit the coding theory connection of Proposition~\ref{prop:strictcodeequiv} to show that this is not always the case.

The extended binary Golay code is a $12$-dimensional code of length $24$ and minimum distance $8$. By Proposition~\ref{prop:strictcodeequiv}, this code is equivalent to an $(8,1;0)$-cover of $\F{2}{12}$ of size $24$, thus implying that $f(12,8)\leq 24$, whereas the upper bound given by Lemma~\ref{lem:genupperbound} is $25$. Furthermore, we see in Table~\ref{tab:uptosix} that $f(6,8) = 18$. By repeated application of Lemma~\ref{lem:recursive}, we must have $f(12,8) \ge f(6,8) + 6$, and thus $f(12,8) = 24$. Moreover, there must be equality in every step of the recursion, and thus $f(n,8) = n+12$ for $6 \le n \le 12$.

This result, coupled with the techniques described previously, allows us to extend Table~\ref{tab:uptosix} to include values for $7 \le n \le 12$ and $3 \le k \le 10$. These new values are depicted in Table~\ref{tab:small} below. We see that the equality $n_0(k) = k+1$ persists for $k = 6, 7$ until the Golay construction comes into existence. In light of Lemma~\ref{lem:krecursion}, this ensures $n_0(k) \ge k+2$ for $8 \le k \le 11$.

\begin{table}[h]
\centering
\begin{tabular}{c|c|c|c|c|c|c|c|c}
\backslashbox{n}{k} & 3 & 4  & 5  & 6  & 7  & 8 & 9 & 10 \\[0.1cm]
\hline
& & & & & & & & \\[-0.3cm]
6 & \blue{9*}  &  \blue{11*}  & \red{13*} &  \orange{14}  & \blue{16} & \red{18}  & \red{20} & \red{22} \\[0.1cm] 
\hline
& & & & & & & &  \\[-0.3cm]
7 & \blue{10*}  & \blue{12*} & \blue{14*} & \red{16*}  &  \orange{17}  & \blue{19} & \blue{21} & \blue{23}  \\[0.1cm]
\hline
& & & & & & & &  \\[-0.3cm]
8 & \blue{11*}  & \blue{13*} & \blue{15*} & \blue{17*}  & \red{19*}   &  \orange{20} & \blue{22} & \blue{24} \\[0.1cm]
\hline
& & & & & & & &  \\[-0.3cm]
9 & \green{12*}  & \blue{14*} & \blue{16*} & \blue{18*}  & \blue{20*} & \cyan{21} & \orange{23} & \blue{25} \\[0.1cm]
\hline
& & & & & & & &  \\[-0.3cm]
10 & \green{13*}  & \blue{15*} & \blue{17*} & \blue{19*}  & \blue{21*}   &  \cyan{22} & \cyan{24} & \orange{26} \\[0.1cm]
\hline
& & & & & & & &  \\[-0.3cm]
11 & \green{14*}  & \blue{16*} & \blue{18*} & \blue{20*}  & \blue{22*}   & \cyan{23}  & \cyan{25} & \cyan{27} \\[0.1cm]
\hline 
& & & & & & & &  \\[-0.3cm]
12 & \green{15*}  & \blue{17*} & \blue{19*} & \blue{21*}  & \blue{23*}   & \cyan{24}  & \cyan{26} & \cyan{28} \\[0.1cm]
\hline 
\vdots & \green{\vdots}  & \blue{\vdots}  & \blue{\vdots}  & \blue{\vdots}  &  \blue{\vdots}  &    &   & \\
\end{tabular}
\caption{More values of $f(n,k)$:
\green{green} represents values coming from Theorem~\ref{thm:main}(a), \red{red} represents values obtained through computer computations, \blue{blue} represents values obtained from other values by the recursive bounds, \orange{orange} represents values obtained by Proposition~\ref{prop:g2diagonal} and recursion, and \cyan{cyan} represents values obtained by the Golay code construction and its recursive consequences. An asterisk denotes values attaining the upper bound of Lemma~\ref{lem:genupperbound}; that is, where $n \ge n_0(k)$.}
\label{tab:small}
\end{table}

This begs the question of what happens for larger values of $k$. Does the gap $n_0(k) - k$ continue to grow? Does the threshold return to $k+1$ at a later point? Unlike the construction in Proposition~\ref{prop:g2diagonal}, the Golay code yields a sporadic construction, which we have not been able to generalise. Furthermore, this is known as a particularly efficient code, and we are not aware of any other code whose parameters lead to an improvement on Proposition~\ref{prop:g2diagonal}. Hence, we are leaning towards the second possibility -- not strongly enough, perhaps, to conjecture it as the truth, but enough to pose it as a question.

\begin{ques} \label{que:k+1threshold}
Do we have $n_0(k) = k+1$ for all $k \ge 12$?
\end{ques}

To answer Question~\ref{que:k+1threshold}, we need to determine the value of $f(k+1,k)$. For an affirmative answer, we need to show $f(k+1,k) = 3k-2$, while a negative answer would follow from a construction showing $f(k+1,k) \le 3k-3$. What could such a construction look like? If we retrace the proof of Theorem~\ref{thm:main}(b), we see that any $k$-cover of $\F{2}{k+1}$ that covers the origin at least $k-2$ times must have size at least $3k-2$. Hence, any construction negating Question~\ref{que:k+1threshold} must cover the origin at most $k-3$ times.

While this seemingly contradicts Corollary~\ref{cor:highcovering}, recall that we needed $n$ to be exponentially large with respect to $k$ to draw that conclusion. Without this condition, the Hamming bound on codes with large distance is not strong enough to provide the requisite lower bound on $f(n,k)$. Indeed, the Gilbert-Varshamov bound, discussed in Remark~\ref{rem:gvbound}, shows that a random collection of $k + O(\log k)$ hyperplanes forms a $3$-cover of $\F{2}{k+1}$ with high probably. Adding $k-3$ arbitrary pairs of parallel planes then gives a $k$-cover of size $3k + O(\log k)$ that only covers the origin $k-3$ times. Thus, we can find numerous $k$-covers that are asymptotically optimal, and we cannot hope for any strong stability when $n$ and $k$ are comparable.


\section{Concluding remarks} \label{sec:conc}

In this paper, we investigated the minimum number of affine subspaces of a fixed codimension needed to cover all nonzero points of $\F{2}{n}$ at least $k$ times, while only covering the origin at most $k-1$ times. We were able to determine the answer precisely when $k$ is large with respect to $n$, or when $n$ is large with respect to $k$, and provided asymptotically sharp bounds for the range in between these extremes. In this final section, we highlight some open problems and avenues for further research.

\paragraph{Bounding the threshold} 

In the previous section, we raised the question of determining the threshold $n_0(k)$ beyond which the result of Theorem~\ref{thm:main}(b) holds. Although our proof requires $n$ to be exponentially large with respect to $k$, our constructions suggest the threshold might, with limited exceptions, be as small as $k+1$.

It is quite possible that solving Question~\ref{que:k+1threshold} will require improving the classic bounds on the length of binary codes of large minimum distance, and will therefore perhaps be quite challenging. However, there is plenty of scope to attack the problem from the other direction, and aim to reduce the exponential upper bound on $n_0(k)$.

Our strategy was to prove the lower bound for $g(n,k,1;k-1)$ and $g(n,k,1;k-2)$, using the recursive bounds. By removing planes covering the origin, we could reduce the remaining cases to $g(n,3,1;0)$, for which, when $n$ is large, the coding theory connection provides a large enough lower bound.

There are two natural ways to improve this argument. The first would be to extend the values $s$ for which we directly prove the lower bound on $g(n,k,1;s)$. For instance, if we could show that $g(n,k,1;s) \ge n + 2k - 3$ for $s \in \{k-3, k-4\}$ as well, then we could reduce the remaining cases to $g(n,5,1;0)$ instead, for which the Hamming bound gives a stronger lower bound. This would still yield an exponential bound on $n_0(k)$, but with a smaller base.

The second approach concerns our reduction to $g(n,3,1;0)$, where we use the fact that removing a hyperplane from a $k$-cover leaves us with a $(k-1)$-cover. However, our constructions contain arbitrary pairs of parallel planes, and thus it is possible to remove from them \emph{two} planes and still be left with a $(k-1)$-cover. If we can show that this is true in general, it could lead to a linear bound on $n_0(k)$.

Finally, while we have focused on the hyperplane case in Question~\ref{que:k+1threshold}, it would also be worth exploring the corresponding threshold $n_0(k,d)$ for $d \ge 2$. It would be very interesting if there were new constructions that appear in this setting where we cover with affine subspaces of codimension $d$.

\paragraph{Larger fields}

In this paper we have worked exclusively over the binary field $\F{2}{}$, but it is also natural to explore these subspace covering problems over larger finite fields, $\mathbb{F}_q$ for $q > 2$. Let us denote the corresponding extremal function by $f_q(n, k, d)$, which is the minimum cardinality of a multiset of $(n - d)$-dimensional affine subspaces that cover all points of $\mathbb{F}_q^n \setminus \{\vec{0}\}$ at least $k$ times, and the origin at most $k - 1$ times. The work of Jamison~\cite{J77} establishes the initial values of this function, showing $f_q(n, 1, d) = (q - 1)(n - d) + q^d - 1$. When it comes to multiplicities $k \ge 2$, some of what we have done here can be transferred to larger fields as well.

To start, we can once again resolve the setting where the multiplicity $k$ is large with respect to the dimension $n$. Indeed, the double-counting lower bound of Lemma~\ref{lem:double-count} generalises immediately to this setting, giving $f_q(n, k, d) \geq q^d k - \floor*{ \frac{k}{q^{n - d}} }$, and one can obtain a matching upper bound by taking multiple copies of every affine subspace.

In the other extreme, where $n$ is large with respect to $k$, the problem remains widely open. We first note that the reduction to hyperplanes from Lemma~\ref{lem:hyperplanerecursion} can be extended, giving $f_q(n, k, d) \leq f_q( n - d + 1, k, 1) + (q^{d - 1} - 1)kq$. Thus, as before, it is best to first focus on the case $d = 1$, and we define $f_q(n, k) \coloneqq f_q(n, k, 1)$. Then Jamison's result gives $f_q(n,1) = (q-1)n$.

For an upper bound, let us start by considering $2$-covers. It is once again true that if one takes the standard $1$-covering by hyperplanes, consisting of all hyperplanes of the form $\{\vec{x}: x_i = c \}$ for some $i \in [n]$ and $c \in \F{q}{} \setminus \{0\}$, the only nonzero vectors that are only covered once are those of Hamming weight $1$. However, since the nonzero coordinate of these vectors can take any of $q-1$ different values, it takes a further $q-1$ hyperplanes to cover these again, and so we have $f(n,2) \le (q-1)(n+1)$. Now, given a $(k-1)$-cover of $\F{q}{n}$, one can obtain a $k$-cover by adding an arbitrary partition of $\F{q}{n}$ into $q$ parallel planes, and this yields $f_q(n,k) \le (q-1)(n+1) + q(k-2)$. This construction is the direct analogue of that from Lemma~\ref{lem:genupperbound}, and so, as in Theorem~\ref{thm:main}(b), we expect it to be tight when $n$ is sufficiently large.

However, the lower bounds are lacking. A simple general lower bound is obtained by noticing that removing $k-1$ hyperplanes from a $k$-cover leaves us with at least a $1$-cover, and so $f_q(n,k) \ge f_q(n,1) + k-1 = (q-1)n + k - 1$. This remains the best lower bound we know --- in particular, even the case of $f_q(n,2)$ is unsolved.

It would of course be very helpful to use some of the machinery we have developed here, and so we briefly explain where the difficulties therein lie. Key to our binary proof was the equivalence with codes of a certain minimum distance, given in Proposition~\ref{prop:strictcodeequiv}. When working over $\F{q}{}$, unfortunately, that equivalence breaks down. For an $n$-dimensional linear code with minimum distance $k$ with generator matrix $A$, we require that, for every nonzero vector $\vec{x} \in \F{q}{n}$, the vector $A \vec{x}$ has at least $k$ nonzero entries. In the binary setting, this was precisely what we wanted, since $\vec{x}$ was covered by the $i$th hyperplane if and only if the $i$th entry of $A \vec{x}$ was nonzero. However, in the $q$-ary setting, for $\vec{x}$ to be covered by the $i$th hyperplane, we need the $i$th entry of $A \vec{x}$ to be equal to a prescribed nonzero value. Hence, while every $k$-covering of $\F{q}{n}$ gives rise to a linear $q$-ary $n$-dimensional code of minimum distance at least $k$, the converse is not true. As a result, the coding theoretic bounds, which are of the form $n + O(k \log n)$, are not strong enough to give us information here.

Another main tool was the recursion over $n$, showing that $f(n,k)$ is strictly increasing in $n$. The same proof goes through here, and we can again show $f_q(n,k) > f_q(n-1,k)$. However, from our bounds, we expect the stronger inequality $f_q(n,k) \ge f_q(n-1,k) + q - 1$ to hold. Intuitively, this is because when we restrict a $k$-cover of $\F{q}{n}$ to $\F{q-1}{n} \subset \F{q}{n}$, there are $q-1$ affine copies of $\F{q-1}{n}$ that are lost. However, this does not (appear to) come out of our probabilistic argument.

It would thus be of great interest to develop new tools to handle the $q$-ary case, as these may also bear fruit when applied to the open problems in the binary setting as well. We believe that new algebraic ideas may be necessary to resolve the following question.

\begin{ques}
For $n \ge n_0(k,q)$, do we have $f_q(n,k) = (q-1)(n+1) + q(k-2)$?
\end{ques}

\paragraph{Polynomials with large multiplicity}

Finally, speaking of algebraic methods, we return to our introductory discussion of the polynomial method. Recall that previous lower bounds in this area have been obtained by considering the more general problem of the minimum degree of a polynomial in $\F{}{}[x_1, x_2, \hdots, x_n]$ that vanishes with multiplicity at least $k$ at all nonzero points in some finite grid, and with lower multiplicity at the origin. Sauermann and Wigderson's recent breakthrough, Theorem~\ref{thm:sauwig}, resolves this polynomial problem for $n \ge 2k-3$ over fields of characteristic 0, while our results here show that, in the binary setting at least, there is separation between the hyperplane covering and the polynomial problems.

Despite this, we wonder whether the answers to the two problems might coincide in the range where the multiplicity $k$ is large with respect to the dimension $n$. That is, can the simple double-counting hyperplane lower bound be strengthened to the polynomial setting? We would therefore like to close by emphasising a question of Sauermann and Wigderson~\cite{SW20}, this time over $\F{2}{}$.

\begin{ques}
Given positive integers $k, n$ with $k \ge 2^{n-2}$, let $P \in \F{2}{}[x_1, x_2, \hdots, x_n]$ be a polynomial that vanishes with multiplicity at least $k$ at every nonzero point, and with multiplicity at most $k-1$ at the origin. Must we then have $\deg( P ) \ge 2k - \floor*{\frac{k}{2^{n-1}}}$?
\end{ques}


\end{document}